\renewcommand\emph{\textbf}
\newcommand\bracemap[4]{
\left\{
  \begin{array}{ccc}
    #1 & \to & #2\\
    #3 & \mapsto & #4
  \end{array}\right.
}
\newcommand\mbb{\mathbb}
\newcommand\mcal{\mathcal}
\newcommand\ol{\overline}
\newcommand\sH{\mcal{H}}
\newcommand\C{\mbb{C}}
\newcommand\D{\mbb{D}}
\newcommand\R{\mbb{R}}
\newcommand\T{\mbb{T}}
\newcommand\scp[1]{\langle #1\rangle}
\newcommand\bigscp[1]{\bigl\langle #1\bigl\rangle}
\renewcommand\epsilon{\varepsilon}
\renewcommand\ge{\geqslant}
\renewcommand\phi{\varphi}
\newcommand\st{\:|\:}
\renewcommand\theta{\vartheta}
\DeclareMathOperator\Sym{Sym}
\DeclareMathOperator\id{id}
\numberwithin{equation}{section}
\newtheoremstyle{smallthm}
{1em}
{1em}
{\small}
{0pt}
{\small\bfseries}
{}
{0pt}
{\thmname{#1}\thmnumber{ #2}\thmnote{ #3}.\enspace}
\theoremstyle{remark}
\theoremstyle{plain}
\newtheorem{Thm}[equation]{Theorem}
\newtheorem{Prop}[equation]{Proposition}
\newtheorem{Cor}[equation]{Corollary}
\newtheorem{Lemma}[equation]{Lemma}
\newtheorem*{Thm*}{Theorem}
\newtheorem*{Prop*}{Proposition}
\newtheorem*{Cor*}{Corollary}
\newtheorem*{Lemma*}{Lemma}
\newtheorem*{Sublemma*}{Sublemma}
\newtheorem*{Conjecture*}{Conjecture}
\theoremstyle{definition}
\newtheorem{Block}[equation]{}
\newtheorem{Def}[equation]{Definition}
\newtheorem{Example}[equation]{Example}
\newtheorem{Remark}[equation]{Remark}
\newtheorem*{Def*}{Definition}
\newtheorem*{Defs*}{Definitions}
\newtheorem*{Example*}{Example}
\newtheorem*{Examples*}{Examples}
\newtheorem*{LemmaDef*}{Lemma and Definition}
\newtheorem*{Notation*}{Notation}
\newtheorem*{Problem*}{Problem}
\newtheorem*{Question*}{Question}
\newtheorem*{Remark*}{Remark}
\newtheorem*{Remarks*}{Remarks}
\newtheorem*{Warning*}{Warning}
\theoremstyle{smallthm}
\newtheorem*{Exercise*}{Exercise}
\numberwithin{Exercise}{section}
\begin{document}

\title[]{A relative Grace Theorem for complex polynomials}

\begin{abstract}
  We study the pullback of the apolarity invariant of complex
  polynomials in one variable under a polynomial map on the complex
  plane. As a consequence, we obtain variations of the classical
  results of Grace and Walsh in which the unit disk, or a circular
  domain, is replaced by its image under the given polynomial map.
\end{abstract}

\author{Daniel Plaumann}
\address{Universit\"at Konstanz}
\email{Daniel.Plaumann@uni-konstanz.de}
\author{Mihai Putinar}
\address{University of California at Santa
  Barbara and Newcastle
  University}
\email{mputinar@math.ucsb.edu}
\email{mihai.putinar@ncl.ac.uk}

\subjclass[2010]{Primary 12D10; secondary 26D05, 30C15, 30C25}

\maketitle

\section*{Introduction}

Let $f(z) = a_0 + a_1 z + \ldots + a_n z^n$ and $g(z) = b_0 + b_1 z + \ldots + b_n z^n$ be
two polynomials with complex coefficients and degrees less than or equal to $n$. The only
joint invariant under affine substitutions which is linear in the coefficients is
\[
[f,g]_n = a_0 b_n - \frac{1}{n} a_1 b_{n-1} + \frac{1}{\binom{n}{2}}
a_2 b_{n-2} + \ldots + (-1)^n a_n b_0.
\]
The two polynomials are called apolar if $[f,g]_n =0$. Apolarity
provides the ground for the study of the simplest type of invariant,
generalizing in higher degree the notion of harmonic quadrics. The
geometric implications of apolarity are surprising and multifold, see
for instance \cite{GY03}. A celebrated result due to Grace and Heawood
asserts that the complex zeros of two apolar polynomials cannot be
separated by a circle or a straight line. As a matter of fact, this
zero location property is equivalent to apolarity \cite{Di38}. An
array of independent proofs of Grace's theorem, as the result is named
nowadays, are known (see \cite{Ho54, Ma66, RS02, Sz22}). The
common technical ingredient of these proofs is a lemma of Laguerre and
induction. Notable is also the coincidence, up to a conjugation in the
second argument, of the apolarity invariant and Fischer's inner
product:
\[
  \scp{f,g}_n = \sum_{k=0}^n \frac{1}{\binom{n}{k}}a_k\ol{b_k},
\]
well known for identifying the adjoint of complex differentiation with
the multiplication by the variable \cite{Fi17}.
  
It is natural and convenient to symmetrize the polynomials $f$ and $g$
and interpret both apolarity and Fischer inner products in terms of
the roots of these polynomials. In this direction an observation due
to Walsh establishes a powerful equivalent statement to Grace's theorem,
known as the Walsh Coincidence Theorem: If a monic polynomial $f$ of
degree $n$ has no zeros in a circular domain $D$ (that is a disc,
complement of a disk, a half-space, open or closed), then its
symmetrization has no zeros in $D^n$, \cite{Ho54, Ma66, RS02,
  Sz22}. In its turn, Walsh's theorem explains and offers a natural
framework for a series of polynomial inequalities, in one or several
variables \cite{Be26, CS35, Ho54}.
  
The aim of the present note is to study the pull-back of the apolarity
invariant by a polynomial mapping $q$ of degree $d$.  It turns out
that the pull-back form can be represented on the original space of
polynomials of degree $n$ by an invertible linear operator $T_{q,n}$:
\[
[f \circ q, g \circ q]_{nd} = [T_{q,n} f, g]_n.
\]
At the geometric level, the pull back is transformed into push-forward
by the ramified cover map $q$. For a circular domain $D$, we
distinguish between the set theoretic image $q(D)$ and the full
push-forward $q_\circ (D)$ which consists of all points in $q(D)$
having the full fiber contained in $D$. Relative $q$-versions of the
theorems of Grace and Walsh follow easily, for instance: {\it If two
  polynomials $f$ and $g$ are $q-apolar$ and all zeros of $f$ are contained
  in $q_\circ(D)$, then $g$ has a zero in $q(D)$}. As expected, the
$q$-version of the Walsh Coincidence Theorem has non-trivial consequences in
the form of polynomial inequalities for symmetric polynomials of
several variables. To be more specific we prove below that a
majorization on the diagonal of $q(D)$ is
transmitted, with the same constant, to the polydomain $q_\circ(D)^n$.
  
The operator $T_{q,n}$ representing the $q$-apolarity form is complex
symmetric, in the sense of \cite{GPP14}, and as a consequence the
eigenfunctions of its modulus are doubly orthogonal, both in the
Fischer norm, and with respect to the apolarity bilinear form. In
particular the zeros of these polynomials cannot be separated by
higher degree algebraic sets deduced from the boundaries of $q(D)$,
respectively $q_\circ(D)$.
  
The idea of providing a more flexible version of Grace's theorem 
also relates to recent work of B.~and H.~Sendov in \cite{Se14}, which
is concerned with finding minimal domains satisfying the statement of Grace's
theorem for a given polynomial.

This paper is structured as follows: Section \ref{Sec:Prelim} contains
notation, preliminaries and some of the classical results. In Section
\ref{Sec:PolyImages}, we discuss polynomial images of circular
domains. Section \ref{Sec:Symmetrization} contains the core results on
symmetrization and pullback and the relative versions of the theorems
of Grace and Walsh. Section \ref{Sec:SkewEigenfunctions} concerns the
study of
skew-eigenfunctions of the symmetrization operator $T_{q,n}$ in the
sense of \cite{GPP14}. 

\medskip
\textit{Acknowledgements.} We would like to thank one of the referees for
helpful comments. Daniel Plaumann was partially supported by
a Research Fellowship of the Nanyang Technological University.

\section{Preliminaries}\label{Sec:Prelim}
\noindent We write $\C[z]_n$ for the space of complex polynomials in $z$ of degree at most $n$.

\begin{Block}
  For $f=\sum_{k=0}^na_kz^k\in\C[z]_n$, the \emph{symmetrization} of
  $f$ in degree $n$ is the polynomial in $n$ variables $y_1,\dots,y_n$
  defined by
  \[
  \Sym_n(f)=\sum_{k=0}^n
  \frac{a_k}{\binom{n}{k}}\sigma_k(y_1,\dots,y_n),
  \]
  where $\sigma_k(y_1,\dots,y_n)$ is the elementary symmetric
  polynomial of degree $k$ in $y_1,\dots,y_n$. The symmetrization is
  the unique multiaffine symmetric polynomial of degree at most $n$ in
  $y_1,\dots,y_n$ with the property
  \[
  \Sym_n(f)(z,\dots,z)=f(z).
  \]
  To verify uniqueness, just note that a multiaffine symmetric
  polynomial of degree at most $n$ in $y_1,\dots,y_n$ is necessarily of the form
  $\sum_{k=0}^n b_k\sigma_k(y_1,\dots,y_n)$.
\end{Block}

\begin{Block}\label{SharpCheck}
  For $f=\sum_{k=0}^na_kz^k\in\C[z]_n$, we write
  \begin{align*}
    &f^\vee = f(-z) = \sum\nolimits_{k=0}^n (-1)^k a_k z^k\\
    &f^\# = z^n\ol{f\bigl(1/\ol{z}\bigr)} = \sum\nolimits_{k=0}^n
    \ol{a_{n-k}}z^k
  \end{align*}
  Observe that $(fg)^\# = f^\#g^\#$ and $\bigl(\prod_{k=1}^n
  (z-\lambda_k)\bigr)^{\#\vee} = \prod_{k=1}^n
  (1+\ol{\lambda_k}z)$. Note also that the definition of $f^\#$
  depends on $n$. Even if $a_n=0$, so that $f$ is of degree less than
  $n$, it is understood that $f^\#$ is defined as above whenever we take $f\in\C[z]_n$.
\end{Block}

\begin{Block}\label{FischerIP}
  The \emph{Fischer inner product} is the Hermitian inner product on $\C[z]_n$
  given by
  \[
  \scp{f,g}_n = \sum_{k=0}^n \frac{1}{\binom{n}{k}}a_k\ol{b_k}
  \]
for
  $f(z)=\sum_{k=0}^n a_nz^n$ and $g(z)=\sum_{k=0}^nb_nz^n$. The
  following hold for all $f\in\C[z]_n$.
    \begin{enumerate}
    \item\label{FIP:orthogonality} $\scp{z^k,z^l}_n=\delta_{k,l}/\binom{n}{k}$ for
      $k,l=0,\dots,n$;\\[-.8em]
    \item\label{FIP:SharpCheck} $\scp{f,g^{\#\vee}}_n = (-1)^n\scp{g,f^{\#\vee}}_n$ for all $g\in\C[z]_n$; \\[-.8em]
    \item $\scp{zg,f}_n = \frac{1}{n}\scp{g,f'}_{n-1}$ for all $g\in\C[z]_{n-1}$;\\[-.8em]
    \item $f(\lambda)=\scp{f,(1+\ol\lambda z)^n}_n$ for all
      $\lambda\in\C$;\\[-.8em]
    \item\label{FIP:Symmetrization} $\Sym_n(f)(y_1,\dots,y_n) = \scp{f,\prod_{k=1}^n (1+\ol{y_k} z)}_n =
      \scp{f,(\prod_{k=1}^n (z-y_k))^{\#\vee}}$.
    \end{enumerate}

\begin{proof}
  (1)---(4) are checked directly. To verify (5), expanding the product
  $\prod_{k=0}^n (1+\ol{y_k}z) = \sum_{k=0}^n
  \ol{\sigma_k(y_1,\dots,y_n)}z^k$ shows that
  \begin{align*}
    \bigscp{f,\prod\nolimits_{k=1}^n (1+\ol{y_k} z)}_n &= \bigscp{\sum\nolimits_{k=0}^n
      a_kz^k,\sum\nolimits_{k=0}^n \ol{\sigma_k(y_1,\dots,y_n)}z^k}_n\\
    &=\sum\nolimits_{k=0}^n\frac{1}{\binom{n}{k}}a_k\sigma_k(y_1,\dots,y_n)
    =\Sym_n(f).\qedhere
  \end{align*}
\end{proof}
\end{Block}

\begin{Block}\label{Bracket}
  For $f,g\in\C[z]_n$, we define
\[
  [f,g]_n = \scp{f,g^{\#\vee}}_n.
\]
  Note that if $g$ is monic of degree $n$ with zeros
  $\mu_1,\dots,\mu_n$, then
  \[ 
[f,g]_n = \Sym_n(f)(\mu_1,\dots,\mu_n).
  \]
  by \ref{FischerIP}(\ref{FIP:Symmetrization}). The form $[-,-]_n$ is
  bilinear and non-degenerate and, by \ref{FischerIP}(\ref{FIP:SharpCheck}), satisfies
    \[ 
      [f,g]_n = (-1)^n[g,f]_n.
    \]
  If $[f,g]_n = 0$, then
  $f$ and $g$ are called \emph{apolar}. 
\end{Block}

A \emph{circular domain} is an open or closed disk or
halfspace in $\C$, or the complement of any such set. We use $\D$ to
denote the open unit disk.

\begin{Thm}[Walsh]\label{Thm:Walsh} If $D\subset\C$ is a circular
  domain and $f\in\C[z]_n$ is monic without zeros in $D$, then
  $\Sym_n(f)$ has no zeros in $D^n$.
\end{Thm}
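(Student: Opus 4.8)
The plan is to reduce Theorem~\ref{Thm:Walsh} to the classical apolarity (Grace--Heawood) phenomenon via the dictionary of \ref{Bracket}, and then to prove that phenomenon by the Laguerre--induction mechanism. Concretely, I would argue by contradiction: suppose $\Sym_n(f)(\mu_1,\dots,\mu_n)=0$ for some $\mu_1,\dots,\mu_n\in D$, and set $g(z)=\prod_{k=1}^n(z-\mu_k)$, a monic polynomial of degree $n$ whose zeros all lie in $D$. By the identity recorded in \ref{Bracket}, $[f,g]_n=\Sym_n(f)(\mu_1,\dots,\mu_n)=0$, so $f$ and $g$ are apolar. Thus it suffices to show: if two monic polynomials of degree $n$ are apolar, then the zeros of one cannot all be contained in a circular domain $D$ while the other is zero-free in $D$ --- which is exactly the situation forced by our assumption on $f$, giving the contradiction.

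To prove this last statement I would induct on $n$, using the polar derivative $D_\zeta h(z)=n\,h(z)-(z-\zeta)h'(z)$ together with the identity
\[
\Sym_{n-1}(D_\zeta f)(y_2,\dots,y_n)=n\cdot\Sym_n(f)(\zeta,y_2,\dots,y_n),
\]
which drops out of the defining formula for $\Sym_n$ after substituting $\sigma_k(y_1,\dots,y_n)=\sigma_k(y_2,\dots,y_n)+y_1\,\sigma_{k-1}(y_2,\dots,y_n)$ and using $\binom{n}{k}^{-1}=\tfrac{n-k}{n}\binom{n-1}{k}^{-1}$. The case $n=1$ is immediate. For the inductive step, put $\zeta=\mu_1\in D$; the identity yields $\Sym_{n-1}(D_{\mu_1}f)(\mu_2,\dots,\mu_n)=0$. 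Now Laguerre's theorem, applied to the circular domain $\C\setminus D$ (which contains all zeros of $f$ and does \emph{not} contain $\mu_1$), shows that $D_{\mu_1}f$ has no zeros in $D$; also $D_{\mu_1}f\equiv0$ would force $f=c(z-\mu_1)^n$, which is impossible since $\mu_1\in D$ is not a zero of $f$. Hence $D_{\mu_1}f$ is a nonzero polynomial of degree at most $n-1$, zero-free in $D$, whose symmetrization in degree $n-1$ vanishes at $(\mu_2,\dots,\mu_n)\in D^{n-1}$ --- contradicting the inductive hypothesis.

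The step needing care --- and the main obstacle --- is the possible drop in degree: $D_{\mu_1}f$ has degree exactly $n-1$ unless $\mu_1$ is the centroid of the zeros of $f$. When $D$ is a halfspace or the exterior of a disk, $\C\setminus D$ is convex and hence contains that centroid, so $\mu_1\in D$ cannot be the centroid and no drop occurs. When $D$ is a disk this can fail, and one must run the induction over the slightly larger class of polynomials in $\C[z]_m$ allowed to have lower degree --- equivalently, carrying a zero ``at infinity'' --- which is harmless precisely because $\infty\notin D$ in that case (using Laguerre's lemma in its form for binary forms of the appropriate formal degree). This is purely degree-bookkeeping rather than an analytic difficulty; everything else is formal manipulation of $\Sym_n$ plus the classical Laguerre lemma. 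One could of course bypass the induction entirely by citing the Grace--Heawood theorem directly, its equivalence with Theorem~\ref{Thm:Walsh} being exactly the identity $[f,g]_n=\Sym_n(f)(\mu_1,\dots,\mu_n)$ of \ref{Bracket}.
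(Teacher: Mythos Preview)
The paper does not actually prove Theorem~\ref{Thm:Walsh}; it simply refers the reader to H\"ormander~\cite{Ho54} and the standard monographs~\cite{Ma66, RS02, Sz22}. Your proposal is a correct reconstruction of precisely the Laguerre--induction argument one finds in those sources: the identity $\Sym_{n-1}(D_\zeta f)=n\cdot\Sym_n(f)(\zeta,\,\cdot\,)$ reduces the degree by one, and Laguerre's lemma controls the zeros of the polar derivative. Your treatment of the degree-drop case (centroid in the convex complement when $D$ is unbounded; formal degree and zeros at infinity when $D$ is a bounded disk) is exactly the standard bookkeeping. So there is nothing to compare here beyond noting that you have supplied what the paper deliberately outsourced.

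One small remark: your opening reduction to ``Grace--Heawood via \ref{Bracket}'' is logically redundant with the rest of your argument. Once you have the symmetrization identity for the polar derivative, the induction proves Walsh's theorem directly, without the detour through $g$ and $[f,g]_n$; conversely, the paper derives Grace \emph{from} Walsh (Corollary~\ref{Cor:Grace}), not the other way around. This is harmless, but you could streamline by dropping the first paragraph and starting straight from the inductive setup.
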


Since the complement of a circular domain is again a circular domain,
an equivalent statement of Walsh's theorem is

\begin{Thm*}[Walsh] Let $D\subset\C$ be a circular domain. If
  $f\in\C[z]_n$ is monic with all zeros in $D$ and
  $(y_1,\dots,y_n)\in\C^n$ is a zero of $\Sym_n(f)$, then $y_k\in D$
  for some $k\in\{1,\dots,n\}$.
\end{Thm*}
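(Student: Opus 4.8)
The plan is to deduce this statement directly from the first form of Walsh's Theorem (Theorem~\ref{Thm:Walsh}) by a complementation argument, exploiting the fact that the complement of a circular domain is again a circular domain, together with the contrapositive. First I would suppose, aiming for a contradiction, that $(y_1,\dots,y_n)\in\C^n$ is a zero of $\Sym_n(f)$ with $y_k\notin D$ for every $k\in\{1,\dots,n\}$. Set $D' = \C\setminus D$; then $D'$ is again a circular domain (a disk, halfspace, or complement thereof), and by assumption $y_1,\dots,y_n\in D'$. The obstacle to applying Theorem~\ref{Thm:Walsh} verbatim is that $f$ need not be without zeros in $D'$—indeed its zeros lie in $D$, so in general they do lie outside $D'$, which is exactly what we want, but we must phrase things so that the monic polynomial fed into Theorem~\ref{Thm:Walsh} is genuinely zero-free in the relevant circular domain.

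The clean way is to run Theorem~\ref{Thm:Walsh} with the roles reorganized via the symmetrization/apolarity dictionary. Since $f$ is monic of degree $n$ with all zeros in $D$, I would instead form the monic polynomial $h\in\C[z]_n$ whose zeros are precisely $y_1,\dots,y_n$ (using the hypothesis that all $y_k\in D' = \C\setminus D$, so $h$ has no zeros in $D$). By \ref{Bracket}, the identity $[f,h]_n = \Sym_n(f)(y_1,\dots,y_n) = 0$ says $f$ and $h$ are apolar. Symmetry of the apolarity form, $[f,h]_n = (-1)^n[h,f]_n$, gives $[h,f]_n = 0$ as well, and applying \ref{Bracket} in the other direction yields $\Sym_n(h)(\lambda_1,\dots,\lambda_n) = 0$, where $\lambda_1,\dots,\lambda_n$ are the zeros of $f$ (all lying in $D$). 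But $h$ is monic of degree $n$ with no zeros in $D$, so Theorem~\ref{Thm:Walsh} forces $\Sym_n(h)$ to have no zeros in $D^n$, contradicting $(\lambda_1,\dots,\lambda_n)\in D^n$.

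The main thing to watch is the degenerate bookkeeping: $f$ is assumed monic of degree exactly $n$ (so it has $n$ zeros $\lambda_1,\dots,\lambda_n$ in $D$, counted with multiplicity), whereas the auxiliary polynomial $h=\prod_{k=1}^n(z-y_k)$ is monic of degree exactly $n$ by construction, so both sides of the apolarity dictionary in \ref{Bracket} apply with no degree drop. One should also note that the case $n=0$ is vacuous and that when $D$ is closed versus open the complement $D'$ is correspondingly open versus closed, but in all cases $D'$ is a circular domain, which is all Theorem~\ref{Thm:Walsh} requires. No step here is genuinely hard; the only real content is recognizing that the symmetry $[f,h]_n=(-1)^n[h,f]_n$ lets us swap which polynomial plays the role of ``$f$'' in Theorem~\ref{Thm:Walsh}, and that forming $h$ from the $y_k$ is legitimate precisely because they were assumed to avoid $D$.
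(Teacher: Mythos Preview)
Your argument is correct, but it takes a detour around an obstacle that isn't there. You write that ``$f$ need not be without zeros in $D'$''---but in fact it \emph{is} without zeros in $D'=\C\setminus D$: the hypothesis is precisely that all zeros of $f$ lie in $D$, hence none lie in $D'$. So Theorem~\ref{Thm:Walsh} applies directly to $f$ with the circular domain $D'$, giving that $\Sym_n(f)$ has no zeros in $(D')^n$. If $(y_1,\dots,y_n)$ is a zero of $\Sym_n(f)$, then $(y_1,\dots,y_n)\notin (D')^n$, so some $y_k\notin D'$, i.e.\ $y_k\in D$. This is the one-line complementation the paper has in mind when it says ``Since the complement of a circular domain is again a circular domain, an equivalent statement\dots''.

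Your route---forming the auxiliary polynomial $h=\prod_k(z-y_k)$, using the apolarity symmetry $[f,h]_n=(-1)^n[h,f]_n$ to swap roles, and then applying Theorem~\ref{Thm:Walsh} to $h$ and $D$---is logically sound and rediscovers, in effect, the standard equivalence between Walsh's theorem and Grace's theorem (compare the proof of Corollary~\ref{Cor:Grace}). But it is not needed here: the direct application of Theorem~\ref{Thm:Walsh} to $f$ and $\C\setminus D$ already gives the conclusion without introducing $h$ or invoking \ref{Bracket}.
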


\noindent See H{\"o}rmander \cite{Ho54} (or \cite{Ma66, RS02,
  Sz22}) for the proof.

\begin{Cor}[Grace's Theorem]\label{Cor:Grace}
  Let $D\subset\C$ be a circular domain and let $f,g\in\C[z]$ be
  apolar polynomials of the same degree. If all zeros of $f$ are
  contained in $D$,
  then $g$ has at least one zero in $D$.
\end{Cor}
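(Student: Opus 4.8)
The plan is to deduce the statement from Walsh's Theorem by contradiction, using the interpretation of the apolarity form as an evaluation of a symmetrization recorded in Block \ref{Bracket}. Write $n$ for the common degree of $f$ and $g$. Since $f$ and $g$ both have degree exactly $n$, their leading coefficients are nonzero; rescaling either one by a nonzero constant changes neither the location of its zeros nor the vanishing of $[f,g]_n$, so I may assume that $f$ and $g$ are both monic. Let $\mu_1,\dots,\mu_n$ be the zeros of $f$, repeated according to multiplicity, all of which lie in $D$ by hypothesis.

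Now suppose, for contradiction, that $g$ has no zero in $D$. Then Walsh's Theorem \ref{Thm:Walsh}, applied to the monic polynomial $g\in\C[z]_n$, shows that $\Sym_n(g)$ has no zero in $D^n$. In particular, since $(\mu_1,\dots,\mu_n)\in D^n$, we have $\Sym_n(g)(\mu_1,\dots,\mu_n)\neq 0$. On the other hand, by the evaluation formula in Block \ref{Bracket} (with the roles of $f$ and $g$ interchanged), and using that $f$ is monic of degree $n$ with zeros $\mu_1,\dots,\mu_n$, we get $[g,f]_n=\Sym_n(g)(\mu_1,\dots,\mu_n)$. Hence $[g,f]_n\neq 0$, and then the symmetry relation $[f,g]_n=(-1)^n[g,f]_n$ from Block \ref{Bracket} yields $[f,g]_n\neq 0$. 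This contradicts the assumption that $f$ and $g$ are apolar, so $g$ must have a zero in $D$.

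The only point that requires any care is the normalization at the start: one must read ``of the same degree'' in the strong sense that both leading coefficients are nonzero, so that $f$ and $g$ can be made monic simultaneously — this is precisely what makes the bracket-evaluation formula applicable on the $g$-side and Walsh's Theorem applicable as stated. Beyond that, the argument is a direct combination of Walsh's Theorem with the identities assembled in Section \ref{Sec:Prelim}; no induction or Laguerre-type lemma is needed here, since that analytic content has already been absorbed into the statement of Walsh's Theorem.
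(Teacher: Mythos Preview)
Your proof is correct and takes essentially the same route as the paper: both deduce Grace's theorem from Walsh's theorem via the identity $[h_1,h_2]_n=\Sym_n(h_1)(\text{roots of }h_2)$ in Block~\ref{Bracket}. The only cosmetic difference is that the paper applies the second (equivalent) formulation of Walsh directly to $\Sym_n(f)$ at the roots of $g$, while you swap the roles of $f$ and $g$, argue by contradiction, and then invoke the symmetry $[f,g]_n=(-1)^n[g,f]_n$---one extra step, but the same idea.
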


\begin{proof} 
  Let $n=\deg(f)=\deg(g)$. Without loss of generality, we may assume
  that $g$ is monic and write $g=\prod_{k=1}^n (z-\mu_k)$. Then
\[
  [f,g]_n = \Sym_n(f)(\mu_1,\dots,\mu_n)
\]
by \ref{Bracket}. By Walsh's theorem, at least one of $\mu_1,\dots,\mu_n$
must lie in $D$, as claimed.
\end{proof}

\section{Polynomial images of circular domains}\label{Sec:PolyImages}

Let $D\subset\C$ be a circular domain and let $q\in\C[z]$ be a
monic polynomial of degree $d$. We want
to understand how Walsh's and Grace's theorem transform under the map
$q\colon\C\to\C$. We will use the following notation:
\[
q_\circ(D) \;=\; \bigl\{u\in\C\st q^{-1}(u)\subset D\bigr\}.
\]

The set $q_\circ(D)$ is semialgebraic and its boundary
is contained in the real algebraic curve $q(\partial D)$. Since the
complement of a circular domain is a circular domain, the set
$q_\circ(D)$ is the complement of the image of a circular domain, namely
\[
q(\C\setminus D) = \C\setminus q_\circ(D)\quad\text{and}\quad
q_\circ(\C\setminus D) = \C\setminus q(D).
\]
\begin{Example}
  Let $q=z^2 + \beta z + \gamma$. Two points $z,w\in\C$ have the same
  image under $q$ if and only if $z+w = -\beta$. This implies
\[
  q^{-1}(q_\circ(D)) = \bigl\{z\in D\st -(z+\beta)\in D\bigr\} =
  D\cap -(D+\beta).
\]
  and $q_\circ(D)$ is the image of that region under $q$. In
  particular, if $D=\D$ is the open unit disk, then $q_\circ(\D)$ is
  non-empty if and only if $|\beta|<2$.

  For example, take $q=z^2+\frac 12z$. The image of the unit circle
  under $q$ is the real quartic curve $\{z=x+iy\st u(x,y)=0\}$ in the
  complex plane, where
\[
  u(x,y)=4x^4 + 4y^4 + 8x^2y^2 - 9x^2 - 9y^2 - 2x + 3.
\]
  The preimage of this curve under $q$ consists of the unit circle and
  the shifted unit circle $\{z\in\C\st |z+\frac 12|=1\}$. The region
  $q_\circ(\D)$ and its preimage are shown in Fig.~\ref{Fig:QuarticCurve}.
  \begin{figure}[h]
    \centering
    \begin{minipage}[c]{0.35\linewidth}
      \begin{center}
        \includegraphics[height=4cm]{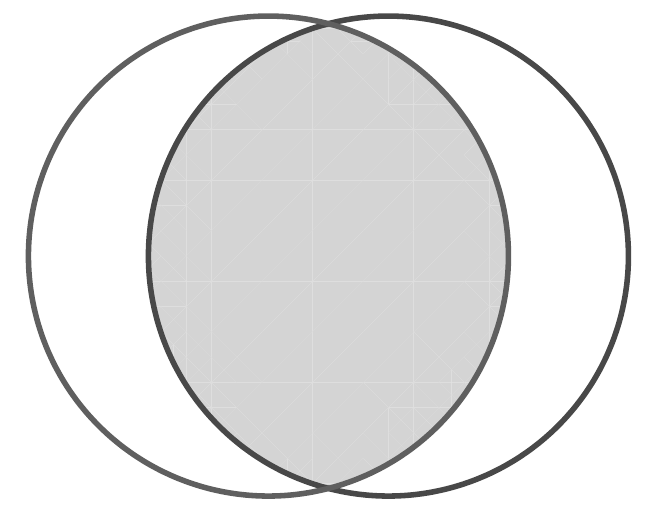}
      \end{center}
    \end{minipage}
    \begin{minipage}[c]{0.1\linewidth}
      \begin{center}
        $\xrightarrow{\ \ \ q\ \ \ }$
      \end{center}
    \end{minipage}
    \begin{minipage}[c]{0.35\linewidth}
      \begin{center}
        \includegraphics[height=5cm]{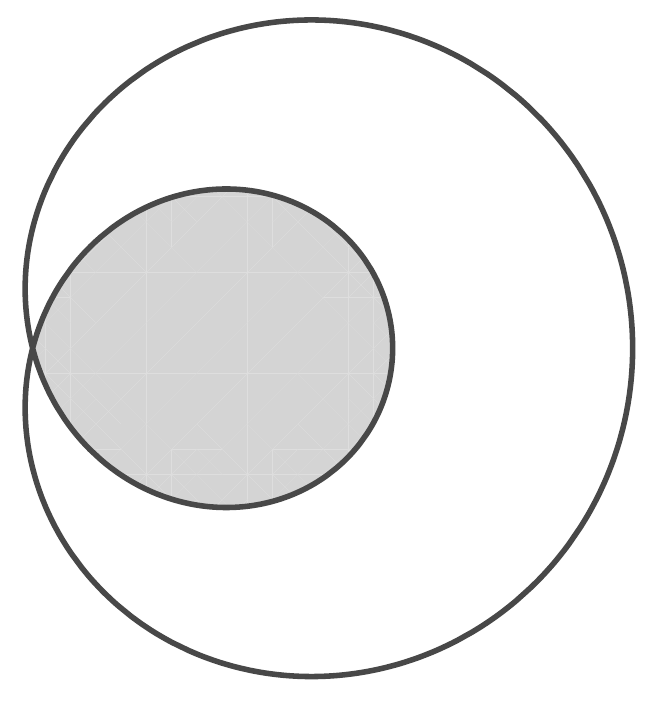}
      \end{center}
    \end{minipage} 

   \caption{The region $q_\circ(\D)$ and its preimage for $q=z^2+\frac
  12 z$}\label{Fig:QuarticCurve}
  \end{figure}
\end{Example}

\begin{Example}\label{Example:Cubic}
  Consider the cubic polynomial $q=z^3 + \frac 12 z$. The image
  of the unit circle under $q$ is the real sextic curve $\{z=x+iy\st
  h(x,y)=0\}$ given by
\[
  u(x,y) = 16 x^6 +48 x^4 y^2 -52 x^4 +48 x^2 y^4 -104 x^2 y^2+40 x^2 +16 y^6-52 y^4+48 y^2-9
\]
shown on the right-hand side of Figure  \ref{Fig:SexticCurve}. The preimage of
this curve under $q$ has two real components, the unit circle and a
curve of degree $8$ given by the vanishing of the polynomial
\begin{align*}
v(x,y) = \;& 16 x^8 +16 x^6 +64 x^6 y^2 +96 x^4 y^4 -16 x^4 y^2 +8 x^4 +64 x^2 y^6 -80 x^2 y^4\\
& -16 x^2 y^2 -28 x^2 +16 y^8 -48 y^6 +40 y^4 -12 y^2 + 9.
\end{align*}
This curve is shown on the left of Figure
\ref{Fig:SexticCurve}. Together with the unit circle, it bounds the
region $q^{-1}(q_{\circ}(\D))$. 

  \begin{figure}[h]
    \centering
    \begin{minipage}[c]{0.35\linewidth}
      \begin{center}
        \includegraphics[height=5cm]{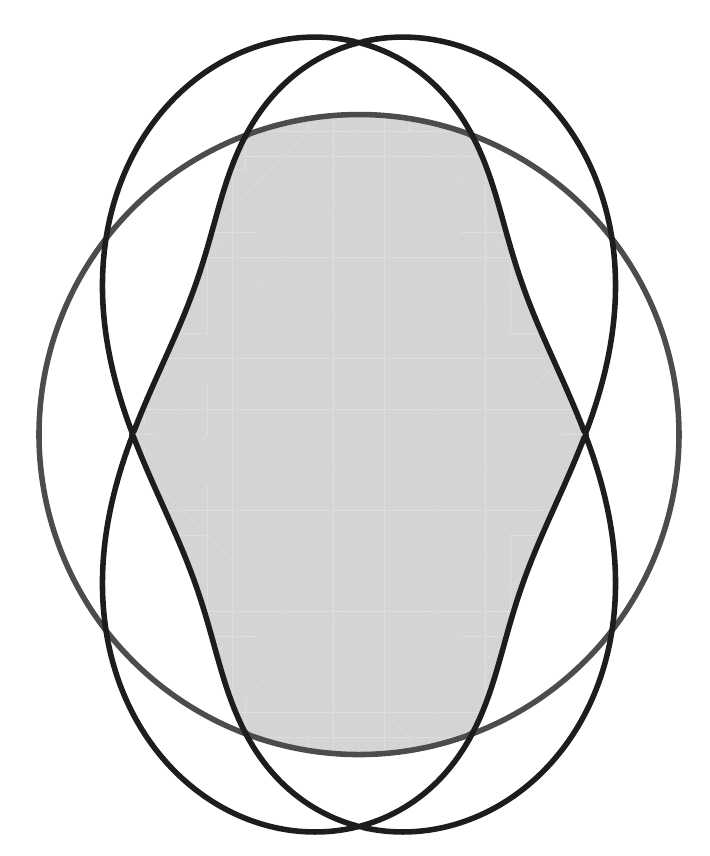}
      \end{center}
    \end{minipage}
    \begin{minipage}[c]{0.1\linewidth}
      \begin{center}
        $\xrightarrow{\ \ \ q\ \ \ }$
      \end{center}
    \end{minipage}
    \begin{minipage}[c]{0.35\linewidth}
      \begin{center}
        \includegraphics[height=5cm]{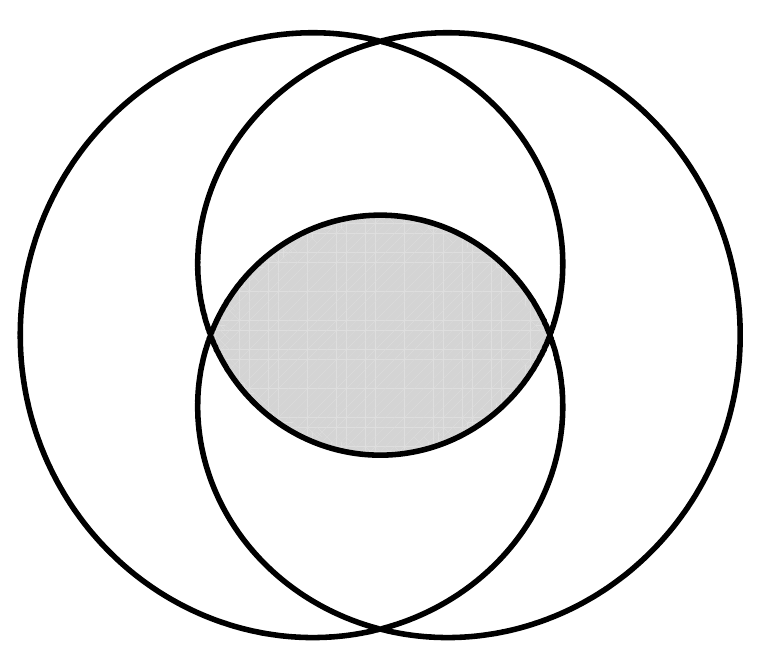}
      \end{center}
    \end{minipage} 

   \caption{The region $q_\circ(\D)$ and its preimage for $q=z^3+\frac
  12 z$}\label{Fig:SexticCurve}
  \end{figure}

\end{Example}

\begin{Remark}
  Note that the region $q_\circ(D)$ need not be connected in
  general. To construct an example in which it is not, consider a
  conformal map $\phi\colon\D\to\Omega$ from the unit disc onto an
  annular sector
\[
   \Omega=\biggl\{z\in\C\:\biggl|\: \frac 12<|z|<1,\ 0<{\rm arg}(z) <\frac{3\pi}{2}\biggr\}.
\]
Such a map $\phi$ exists by the Riemann mapping theorem and, by Caratheodory's
theorem \cite{Ca13}, extends continuously to a map
$\Phi\colon\ol{\D}\to\ol{\Omega}$ between the closures. Now $\Phi$ can
be approximated uniformly by a sequence of polynomials. Consequently,
if $p\in\C[z]$ satisfies
\[
  \|\Phi-p\|_{\infty,\D}<\epsilon,
\]
the image $p(\D)$ has Hausdorff-distance at most $\epsilon$ from
$\Omega$. Put $q=p^2$, then, by construction, the complement of the
image $q(\D)$ is disconnected, the origin being contained in a bounded
connected component of $\C\setminus q(\D)$. Thus if we consider the circular region
$D=\C\setminus\D$, then $q_\circ(D)=\C\setminus q(\D)$ is not
connected.
\end{Remark}

In order to test whether $q_\circ(\D)$ is non-empty for a given $q$, we
may proceed as follows. Given a monic polynomial $q\in\C[z]_d$, write
\[
  \frac{q^\#(x)\ol{q^\#(\ol y)} - q(x)\ol{q(\ol y)}}{1-xy} =
  \sum_{j,k=0}^{d-1} a_{jk} x^j y^k.
\]
By the Schur-Cohn criterion (see \cite[\S 3.3]{KN81}), all roots
of $q$ are contained in $\D$ if and only if the Hermitian form defined
by the $d\times d$-matrix 
\[
{\rm SC}(q) = (a_{jk})_{j,k=0,\dots,d-1}
\]
is positive definite. This proves the following.

\begin{Prop}
  The region $q_\circ(\D)$ is empty if and only if the matrix ${\rm
    SC}(q-\lambda)$ has a non-positive eigenvalue, for every
  $\lambda\in\C$.\qed
\end{Prop}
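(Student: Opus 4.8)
The plan is to read the statement off directly from the Schur--Cohn criterion quoted above, applied not to $q$ itself but to the translated family $q-\lambda$, $\lambda\in\C$. First I would record the only point needing (minor) care: since $q$ is monic of degree $d$, so is $q-\lambda$ for every $\lambda\in\C$, and hence ${\rm SC}(q-\lambda)$ is a well-defined $d\times d$ Hermitian matrix to which the criterion applies verbatim. Translation by a scalar preserves both monicity and the degree, so the whole family $\{q-\lambda\}_{\lambda\in\C}$ is legitimate input for Schur--Cohn.

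Next I would unwind the definition of $q_\circ(\D)$. A point $u\in\C$ lies in $q_\circ(\D)$ precisely when $q^{-1}(u)\subset\D$, that is, when every root of the monic degree-$d$ polynomial $q-u$ lies in the open unit disk. By the Schur--Cohn criterion this holds if and only if the Hermitian form ${\rm SC}(q-u)$ is positive definite. Consequently $q_\circ(\D)\neq\emptyset$ if and only if ${\rm SC}(q-\lambda)$ is positive definite for at least one $\lambda\in\C$.

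Finally I would pass to the contrapositive: $q_\circ(\D)=\emptyset$ if and only if ${\rm SC}(q-\lambda)$ fails to be positive definite for every $\lambda\in\C$. Since ${\rm SC}(q-\lambda)$ is Hermitian, it is positive definite exactly when all of its eigenvalues are strictly positive, so failing to be positive definite is equivalent to having an eigenvalue $\le 0$, i.e.\ a non-positive eigenvalue. This yields the asserted equivalence. There is essentially no obstacle here; the proposition is an immediate corollary of the Schur--Cohn criterion together with the elementary reformulations of ``positive definite for some $\lambda$'' via contraposition, which is why the statement carries a $\qed$ at the end of its display rather than a separate proof.
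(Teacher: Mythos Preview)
Your proposal is correct and matches the paper's own approach exactly: the proposition is stated as an immediate corollary of the Schur--Cohn criterion (hence the \qed\ in the display), and you have unpacked precisely that argument. There is nothing to add.
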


\noindent Similar criteria exist for the case of a
halfplane instead of a disk. 

\begin{Example}
  Consider the family of cubic polynomials $q(z)=z^3+\gamma z$ for
  $\gamma\in\C$. We compute the Schur-Cohn matrix and find
\[ 
{\rm SC}(q-\lambda) = 
\begin{bmatrix}
  1-|\lambda|^2 & \ol{\lambda}\gamma & \ol{\gamma}\\
\lambda\ol{\gamma} & 1-|\lambda|^2-|\gamma|^2 & \ol{\lambda}\gamma\\
\gamma & \lambda\ol{\gamma} & 1-|\lambda|^2
\end{bmatrix}.
\]
Thus we see that if $|\gamma|\ge 1$, the matrix ${\rm SC}(q-\lambda)$
is not positive definite for any $\lambda$, so that
$q_\circ(\D)=\emptyset$. Conversely, if $|\gamma|<1$, then ${\rm
  SC}(q)$ is positive definite and hence
$q_\circ(\D)\neq\emptyset$. 

For real $\gamma\in (0,1)$, the picture in the complex plane will
essentially look the same as in the case $\gamma=\frac 12$ in Example
\ref{Example:Cubic} above. For $\gamma=1$, the image curve will
degenerate as in Figure \ref{Fig:EmptyRegion} and the region
$q_\circ(\D)$ will be empty.
\begin{figure}[h]
    \centering
    \begin{minipage}[c]{0.35\linewidth}
      \begin{center}
        \includegraphics[height=5cm]{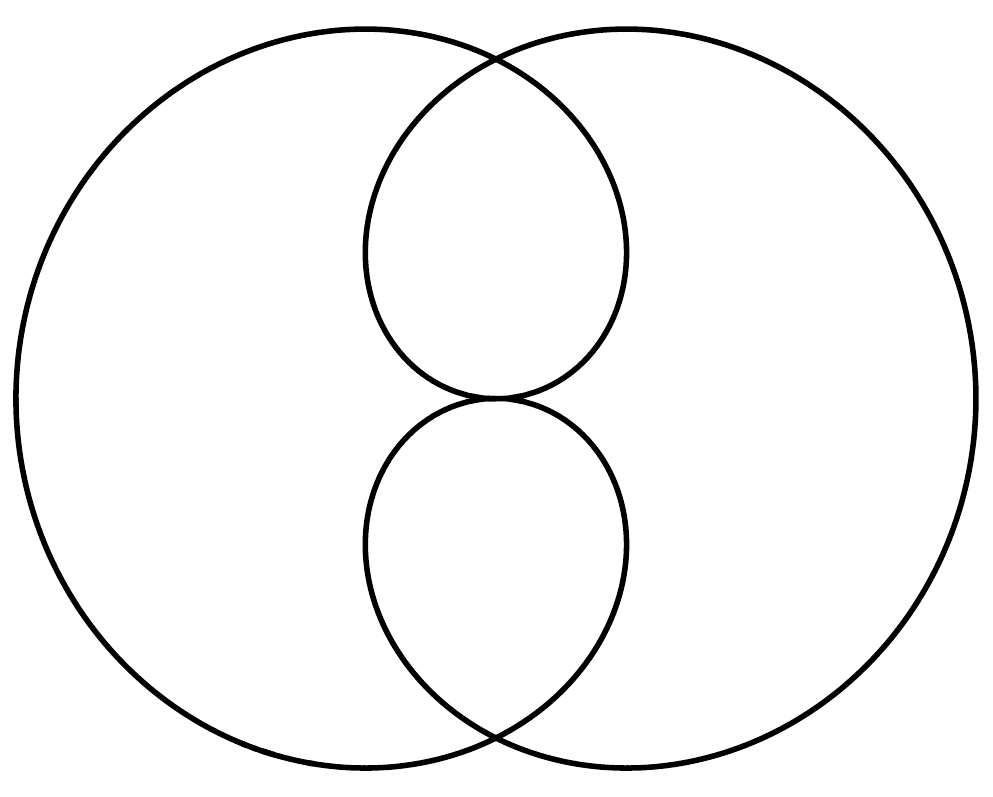}
      \end{center}
    \end{minipage}
   \caption{For $q=z^3+z$, the region $q_\circ(\D)$ is empty.}\label{Fig:EmptyRegion}
  \end{figure}
\end{Example}

\begin{Remark}
  If $q$ is viewed as a rational function on the Riemann
  sphere $\ol\C$, then $q^{-1}(\infty)=\infty$. It follows that if $D$
  is an unbounded domain, so that $\infty$ is contained in the
  closure of $D$ inside $\ol\C$, then $\infty$ is also in the closure
  of $q_\circ(D)$, by continuity. In particular, $q_\circ(D)$ is
  non-empty.\\
  More generally, we may consider
\[
  q_{(k)}(D)=\bigl\{u\in\C\st \#(q^{-1}(u)\cap D)=k\bigr\},
\]
  where $\#(q^{-1}(u)\cap D)$ is the cardinality of the fiber,
  counted with multiplicities. Clearly, the regions
  $q_{(k)}(D)$ are pairwise disjoint for different values of
  $k$ and the boundary of $q_{(k)}(D)$ is contained in the curve
  $q(\partial D)$. Furthermore, we have
\[
  q_{(d)}(D)=q_\circ(D),\quad \bigcup_{k=0}^dq_{(k)}(D)=\C \quad\text{and}\quad\bigcup_{k=1}^dq_{(k)}(D)=q(D).
\]
The regions $q_{(k)}(D)$ can also be characterized in terms of
a fiber-counting integral. For example, let
$D=\C\setminus\D$ be the complement of the unit disk. For $u\in\C$, let
\[
N(u) = \frac{1}{2\pi i}\lim_{R\to\infty}\biggl(\int_{|z|=R}\frac{q'(z)}{q(z)-u}dz -
\int_{|z|=1}\frac{q'(z)}{q(z)-u}dz\biggr).
\]
Then $N(u)$ is the number of preimages of $u$ under $q$ contained in
$D$, counted with multiplicities, so that $q_{(k)}(D) =
\{u\in\C\st N(u)=k\}$. 
\end{Remark}

For our purposes, it would be best if we could have
$q_\circ(D)=q(D)$. Unfortunately, this case does not occur in a
non-trivial way, as the following theorem shows.

\begin{Thm}
  The equality $q_\circ(\D)=q(\D)$ occurs if and only if $q=z^d+c$ for
  some $d\ge 1$ and $c\in\C$.
\end{Thm}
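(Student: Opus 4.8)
The plan is to work on the Riemann sphere $\overline\C=\C\cup\{\infty\}$, on which $q$ extends to a branched covering $\overline q$ of degree $d$ with $\overline q^{-1}(\infty)=\{\infty\}$, and to show that the equality in the theorem forces $\overline q$ to restrict to a rigid proper map of the exterior of the unit disk.

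For the ``if'' direction: if $q=z^d+c$, then $q^{-1}(u)$ consists of the $d$-th roots of $u-c$, all of modulus $|u-c|^{1/d}$, so $q^{-1}(u)\subset\D$ exactly when $|u-c|<1$, which is precisely the condition $u\in q(\D)=\D+c$; hence $q_\circ(\D)=q(\D)$. For the converse, write $V=q(\D)$, a bounded open connected set with $\overline V=q(\overline\D)$. Since $q$ is surjective one has $q_\circ(\D)\subset q(\D)$ in general, and $q_\circ(\D)=q(\D)$ holds if and only if $q^{-1}(V)=\D$: if $q^{-1}(V)=\D$ and $u\in V$ then $q^{-1}(u)\subset\D$, while if the two sets agree and $q(z)\in V$ then $z\in q^{-1}(q(z))\subset\D$. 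So from now on I assume $q^{-1}(V)=\D$ and aim to deduce $q=z^d+c$.

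The main step is to promote this to a statement about the exterior disk. From $q^{-1}(V)=\D$ one first gets $q(\partial\D)=\partial V$: any $z_0$ with $|z_0|=1$ satisfies $q(z_0)\in\overline V$ but $q(z_0)\notin V$ (else $z_0\in q^{-1}(V)=\D$), and conversely every point of $\partial V\subset q(\overline\D)$ has a preimage of modulus $\le 1$, which cannot lie in $\D$ and so lies on $\partial\D$. Next, $q(\{|z|>1\})$ is open by the open mapping theorem, is disjoint from $V$ (again because $q^{-1}(V)=\D$), and contains $\C\setminus\overline V$ (a point outside $q(\overline\D)$ has all its preimages of modulus $>1$); being open and disjoint from $V$ it cannot meet $\partial V$, whence $q(\{|z|>1\})=\C\setminus\overline V$ and $q^{-1}(\C\setminus\overline V)=\{|z|>1\}$. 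Therefore, setting $\Omega=\overline\C\setminus\overline V$ and $E=\{|z|>1\}\cup\{\infty\}$, the map $\overline q\colon E\to\Omega$ is a proper holomorphic map of degree $d$ (of degree $d$ because $\infty$ is the unique, and totally ramified, preimage of $\infty$). Since $\overline V$ is compact and connected and $\Omega=\overline q(E)$ is connected, $\Omega$ is a simply connected domain in $\overline\C$; and $E$ is simply connected as well.

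Finally I would normalize conformally. Choose biholomorphisms $\psi_1\colon\D\to E$, $\psi_1(w)=1/w$, and $\psi_2\colon\D\to\Omega$ with $\psi_2(0)=\infty$; then $\psi_2$ has a simple pole at $0$, say $\psi_2(\zeta)=r\zeta^{-1}+(\text{holomorphic})$ with $r\neq0$. The composite $g=\psi_2^{-1}\circ\overline q\circ\psi_1\colon\D\to\D$ is proper holomorphic of degree $d$ with $g(0)=0$ and local degree $d$ at $0$; hence $g$ vanishes only at $0$, so $g(w)/w^d$ is holomorphic and zero-free on $\D$ with modulus tending to $1$ at the boundary, and the maximum principle applied to it and to its reciprocal forces $g(w)=e^{i\varphi}w^d$. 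Consequently $q(1/w)=\psi_2(e^{i\varphi}w^d)$. The left side has Laurent expansion $w^{-d}+a_{d-1}w^{-d+1}+\dots+a_1 w^{-1}+a_0$ near $w=0$, whereas the right side has principal part equal to the single term $re^{-i\varphi}w^{-d}$; comparing the two gives $a_1=\dots=a_{d-1}=0$, i.e.\ $q=z^d+a_0$. I expect the crux to be the topological bookkeeping of the previous paragraph---in particular verifying that $q(\{|z|>1\})$ is exactly $\C\setminus\overline V$ and that $\Omega$ is simply connected---after which the conformal rigidity of $g$ and the Laurent comparison are routine.
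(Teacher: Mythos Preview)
Your argument is correct and takes a genuinely different route from the paper.

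The paper works on the \emph{inside}: it shows $q(\D)$ is simply connected with boundary $q(\T)$, applies the Riemann mapping theorem to $q(\D)$, invokes Carath\'eodory's boundary extension theorem to see that $\phi\circ q\colon\ol\D\to\ol\D$ is a finite Blaschke product, then identifies its numerator with $q-a$; the resulting identity forces $q^\#(z)-\ol a z^d$ to be constant on the fibres of $q$, which is handled by a separate algebraic lemma (their Lemma on polynomials constant along fibres), and finally the base point $a$ is varied to kill all intermediate coefficients. You instead work on the \emph{outside}: passing to the exterior $E=\{|z|>1\}\cup\{\infty\}$ and $\Omega=\ol\C\setminus\ol V$ lets you exploit that $\infty$ is an interior point at which $q$ is already totally ramified of order $d$. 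After uniformizing, this forces the induced proper self-map of $\D$ to be $w\mapsto e^{i\varphi}w^d$ on the nose, and a single Laurent comparison at $w=0$ finishes. Your approach thus sidesteps both Carath\'eodory's theorem and the fibre-constancy lemma, and requires no variation of an auxiliary parameter; the price is the preliminary topological bookkeeping showing $q(\{|z|>1\})=\C\setminus\ol V$ exactly, which you handle cleanly via the open mapping theorem and the observation that an open set disjoint from $V$ cannot meet $\partial V$. Both approaches ultimately rest on the rigidity of proper self-maps of the disc, but yours gets there more directly because the point $\infty$ already witnesses the full ramification.
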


\begin{proof}
  Let $q\in\C[z]$ be monic of degree $d$ and assume
  $q_\circ(\D)=q(\D)$, which is equivalent to
  $q^{-1}(q(\D))=\D$. Write $\T=\partial\D$, then $q^{-1}(q(\T))=\T$
  by continuuity.  We show first that this implies
  $q(\T)=\partial q(\D)$.  Note that $q$ is non-constant and hence
  $q\colon\C\to\C$ is a surjective open map, which implies $\partial
  q(\D)\subset q(\T)$. Suppose this inclusion is strict, which means
  that $q(\T)$ contains interior points of $\ol{q(\D)}$. Since $q(\T)$ is
  a closed curve, it follows that $\ol{q(\D)}\setminus q(\T)$ is
  disconnected.
  On the other hand, $q^{-1}(q(\T))=\T$ implies $q(\D)\cap
  q(\T)=\emptyset$, hence $q(\D)$ is disconnected, a contradiction.

  From this we see that $q(\D)$ is an open subset of $\C$ with
  connected boundary and hence it is simply connected. Fix a point
  $a\in q(\D)$. By the Riemann mapping theorem, there exists a
  biholomorphic map $\phi\colon q(\D)\to\D$ with $\phi(a)=0$. Let
  $B=\phi\circ q\colon\D\to\D$. Since $q(\D)$ is simply connected and
  bounded by the Jordan curve $q(\T)$, the holomorphic map $B$ extends
  continuously to a map $\ol\D\to\ol\D$, by Carath\'eodory's theorem
  \cite{Ca13}. This implies that $B$ has a representation as a
  finite Blaschke product of degree $d$, i.e.~there exists a monic
  polynomial $h\in\C[z]$ of degree $d$ and $\alpha\in\C$ such that
\[
  B(z) = \alpha\cdot \frac{h(z)}{h^\#(z)}
\]
for all $z\in\ol\D$ (see for example \cite[\S 20]{Co95}). Let
$c_1,\dots,c_d\in\C$ be the zeros of $h$, then $B(c_k)=\phi(q(c_k))=0$
implies that $c_1,\dots,c_d$ are also zeros of $q(z)-a$, so that
$q-a=h$. Factoring $\phi$ as $\phi(z)=(z-a)\psi(z)$ for some holomorphic
map $\psi\colon q(\D)\to\C$, we can write
\[
\alpha\cdot\frac{q(z)-a}{q^\#(z)-\ol{a}z^d} =
\alpha\cdot\frac{h(z)}{h^\#(z)} = B(z) = \bigl(q(z)-a\bigr)\psi\bigl(q(z)\bigr)
\]
for $z\in\D$. Dividing both sides by $q(z)-a$ shows
\[
q^\#(z) - \ol{a}z^d = \frac{\alpha}{\psi(q(z))}.
\]
Therefore, $q^\#(z)-\ol{a}z^d$ is a polynomial of
degree at most $d$ that is constant along the fibers of $q$. By Lemma
\ref{Lemma:ConstantOnFibers} below, this implies that there exist
constants $b,c\in\C$ such that $q^\#(z)-\ol{a}z^d = b q + c$. But the
point $a\in q(\D)$ can be chosen arbitrarily, so we obtain constants
$b_a,c_a\in\C$ depending on $a\in q(\D)$ and identities
\[
q^\#(z) - \ol{a}z^d = b_a q + c_a.
\]
Expanding $q(z)=z^d+\sum_{j=0}^{d-1}\alpha_j z^j$ and comparing
leading coefficients on both sides leads to $b_a = \ol{\alpha_0} -
\ol{a}$. After cancelling leading coefficients, we are then left with 
\[
1 + \sum_{j=1}^{d-1}\ol{\alpha_{d-j}}z^j =
(\ol{\alpha_0}-\ol{a})\sum_{j=0}^{d-1}\alpha_jz^j + c_a.
\]
For this to hold for all $a\in q(\D)$, we must have
$\alpha_1=\cdots=\alpha_{d-1}=0$, so that $q(z)-z^d$ is constant, as claimed.
\end{proof}

\begin{Lemma}\label{Lemma:ConstantOnFibers}
  If $q,r\in\C[z]$ are two polynomials of the same degree such that $r$ is
  constant along the fibers of $q$, i.e.~$q(z)=q(w)$ implies
  $r(z)=r(w)$ for all $z,w\in\C$, then there are constants $b,c\in\C$
  such that $r = bq + c$.
\end{Lemma}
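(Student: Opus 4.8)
Write $d=\deg q=\deg r$. If $r$ is constant (in particular if $d=0$) the claim holds trivially with $b=0$, so assume $d\ge 1$ and $r$ non-constant. The plan is to translate the fibre-constancy hypothesis into a divisibility in $\C[z,w]$ and then conclude by a degree count. Put
\[
F(z,w)=q(z)-q(w),\qquad G(z,w)=r(z)-r(w),
\]
both elements of $\C[z,w]$; the hypothesis says exactly that $G$ vanishes on the affine curve $\{(z,w)\in\C^2\st q(z)=q(w)\}=V(F)$.

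First I would check that $F$ is squarefree, so that $(F)$ is a radical ideal. The coefficient of $z^d$ in $F$ is the leading coefficient of $q$, a nonzero constant, so $F$ is primitive as an element of $\C[w][z]$; and viewed in $\C(w)[z]$ it is separable, since a repeated root $\zeta\in\ol{\C(w)}$ of $F(\cdot,w)$ would satisfy $q'(\zeta)=0$, hence $\zeta\in\C$ (being algebraic over $\C$) and $q(\zeta)\in\C$, contradicting $q(\zeta)=q(w)$, which is a non-constant polynomial in $w$. By a standard content argument, squarefreeness over $\C(w)$ together with primitivity gives that $F$ is squarefree in $\C[z,w]$, so $(F)$ is radical. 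Hilbert's Nullstellensatz then upgrades the vanishing of $G$ on $V(F)$ to a genuine divisibility $F\mid G$; write $G=F\cdot s$ with $s\in\C[z,w]$. (Alternatively, and more elementarily: for all but finitely many $w\in\C$ the polynomial $F(\cdot,w)$ has $d$ distinct complex roots, all of which are roots of $G(\cdot,w)$, so $F(\cdot,w)\mid G(\cdot,w)$ for infinitely many $w$, which forces $F\mid G$ after polynomial division in $\C(w)[z]$ and clearing denominators.)

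Now I would compare degrees. In the variable $z$: $\deg_z G=d$ (its $z^d$-coefficient is the leading coefficient of $r$) and $\deg_z F=d$, so $\deg_z s=0$; the identical count in $w$ — or the symmetries $F(z,w)=-F(w,z)$, $G(z,w)=-G(w,z)$ — gives $\deg_w s=0$. Hence $s$ is a constant $b\in\C$, so $r(z)-r(w)=b\bigl(q(z)-q(w)\bigr)$ for all $z,w$, which means $r-bq$ is a constant $c$, i.e.\ $r=bq+c$, as claimed. Note this is exactly where the hypothesis $\deg q=\deg r$ is used: without it $s$ need not be constant, as the example $q=z^2$, $r=z^4=q^2$ shows.

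The only step that is not purely formal is the divisibility $F\mid G$ — that is, the squarefreeness of $q(z)-q(w)$ and the care needed to pass from a statement about $\C$-points to the Nullstellensatz — and that is where I expect the substance of the argument to lie. If one prefers an analytic route instead: away from the finite set of critical values of $q$ the relation $r=\phi\circ q$ defines $\phi$ unambiguously and holomorphically, $\phi$ stays bounded near the critical values (so extends to an entire function), and since $r(z)/q(z)$ tends to the ratio of the leading coefficients of $r$ and $q$ as $z\to\infty$, the quotient $\phi(u)/u$ has a finite nonzero limit as $u\to\infty$. An entire function of at most linear growth is affine, so $\phi(u)=bu+c$ and again $r=bq+c$.
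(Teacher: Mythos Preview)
Your proof is correct and follows essentially the same route as the paper: both establish the divisibility $q(z)-q(w)\mid r(z)-r(w)$ via squarefreeness and the Nullstellensatz, then finish with a degree comparison. The only differences are cosmetic --- you justify squarefreeness in detail (the paper asserts it) and argue the degree count symmetrically in $z$ and $w$, whereas the paper specializes $w$ to a zero of $r$ --- and your alternative analytic route at the end is a genuine bonus not in the paper.
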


\begin{proof}
  If $q,r$ are constant, there is nothing to show. Otherwise, consider
  the algebraic curve $Z=\{(z,w)\in\C^2\st q(z) = q(w)\}\subset\C^2$. By
  hypothesis, the polynomial $r(z)-r(w)\in\C[z,w]$ vanishes
  identically on $Z$. Since $q(z)-q(w)$ is square-free, Hilbert's Nullstellensatz gives an
  identity
\[
  r(z) - r(w) = s(z,w)\bigl(q(z)-q(w)\bigr)
\]
for some $s\in\C[z,w]$. Let $\zeta\in\C$ be any zero of $r$,
then $r(z) = s(z,\zeta)\bigl(q(z)-q(\zeta)\bigr)$. Since
$r$ and $q$ are of the same degree, $s(z,\zeta)$ must have degree $0$
in $z$, so that putting $b=s(1,\zeta)$ and
$c=-s(1,\zeta)q(\zeta)$ yields the desired identity.
\end{proof}

\section{Symmetrization and pullback}\label{Sec:Symmetrization}

Let $f\in\C[z]_n$, $q\in\C[z]_d$, with $q$ monic, and consider
$\Sym_{nd}(f\circ q)(y_{11},\dots,y_{nd})$, the symmetrization of
$f\circ q$ in the $nd$ variables $(y_{jk}\st
j=1,\dots,n,k=1,\dots,d)$.  Let
\[
Q\colon\bracemap{\C^n}{\C^n}{(z_1,\dots,z_n)}{\bigl(q(z_1),\dots,q(z_n)\bigr)}.
\]
Since $q$ has degree $d$, the fibers of $Q$ can be identified with
points in $\C^{nd}$. Let $(u_1,\dots,u_n)\in\C^n$ and put
\[
\bigl(y_{11},\dots,y_{nd}\bigr) = Q^{-1}(u_1,\dots,u_n).
\]
In other words, $y_{j1},\dots,y_{jd}$ are the zeros of the polynomial
$q(z)-u_j$. We compute the restriction of $\Sym_{nd}(f\circ q)$ to
this fiber. By \ref{FischerIP}(\ref{FIP:Symmetrization}), we have
\[
\Sym_{nd}(f\circ q)(y_{11},\dots,y_{nd}) = \bigscp{f\circ q,\prod\nolimits_{j=1}^n\prod\nolimits_{k=1}^d (1+\ol{y_{jk}} z)}_{nd}.
\]
Since $q(z) - u_j = \prod_{k=1}^d (z-y_{jk})$, we find $\prod_{k=1}^d
(1+\ol{y_{jk}} z)=(q(z)-u_j)^{\#\vee}$ by \ref{SharpCheck}, hence
\[
\Sym_{nd}(f\circ q)|_{Q^{-1}(u_1,\dots,u_n)} = \bigscp{f\circ q,\prod\nolimits_{j=1}^n(q-u_j)^{\#\vee}}_{nd}.
\]

\noindent We introduce the following notation.
\begin{align*}
&S_{q,n}(f)(u_1,\dots,u_n)  = \bigscp{f\circ q,\prod\nolimits_{j=1}^n(q-u_j)^{\#\vee}}_{nd}\\
&T_{q,n}(f) = S_{q,n}(f)(z,\dots,z)
\end{align*}

\begin{Prop}\label{Prop:propertiesT}\item{}
  \begin{enumerate}[(a)]
  \item $S_{q,n}(f) = \Sym_n(T_{q,n}(f))$ for all $f\in\C[z]_n$.
  \item $\deg\bigl(T_{q,n}(f)\bigr)=\deg(f)$ for all $f\in\C[z]_n$.
  \item The leading coefficient of $T_{q,n}(z^k)$ is
\[
(-1)^{k(d+1)}\textstyle\binom{n}{k}\bigl/\binom{nd}{kd}.
\]
  These leading coefficients are exactly the eigenvalues of
  $T_{q,n}$. 
  \item The linear operator $T_{q,n}\colon\C[z]_n\to\C[z]_n$ is
    invertible. 
  \end{enumerate}
\end{Prop}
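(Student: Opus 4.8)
The plan is to reduce everything to understanding the action of $T_{q,n}$ on the monomial basis $1,z,\dots,z^n$, establishing the parts in the order (a), then (c), then (b) and (d) as formal consequences. For part~(a) I would first put $S_{q,n}(f)$ into closed form: since $\#$ and $\vee$ are multiplicative, and the $\#$ of a product of polynomials of degrees $d_1,\dots,d_n$ formed in degree $\sum_i d_i$ is the product of the $\#$'s formed in the individual degrees, one has $\prod_{j=1}^n(q-u_j)^{\#\vee}=\bigl(\prod_{j=1}^n(q-u_j)\bigr)^{\#\vee}$ with the right-hand $\#$ taken in degree $nd$, whence
\[
S_{q,n}(f)(u_1,\dots,u_n)=\bigl[f\circ q,\ \textstyle\prod_{j=1}^n(q-u_j)\bigr]_{nd}.
\]
Expanding $\prod_{j=1}^n(q(z)-u_j)=\sum_{l=0}^n(-1)^l\sigma_l(u_1,\dots,u_n)\,q(z)^{n-l}$ and using that $[f\circ q,-]_{nd}$ is linear yields
\[
S_{q,n}(f)=\sum_{l=0}^n(-1)^l[f\circ q,q^{n-l}]_{nd}\,\sigma_l(u_1,\dots,u_n),
\]
which is manifestly multiaffine, symmetric, and of degree $\le n$ in $u_1,\dots,u_n$; in particular $T_{q,n}(f)=S_{q,n}(f)(z,\dots,z)$ lies in $\C[z]_n$ and $T_{q,n}$ is linear. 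Comparing the coefficient of $z^k$ in $T_{q,n}(f)$ with the coefficient of $\sigma_k$ above and invoking the uniqueness of the symmetrization recalled at the start of Section~\ref{Sec:Prelim} gives $S_{q,n}(f)=\Sym_n(T_{q,n}(f))$.

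Next I would compute $T_{q,n}$ on monomials, which settles (c) and then (b). Taking $f=z^k$ in the formula above gives
\[
T_{q,n}(z^k)=\sum_{l=0}^n(-1)^l\binom{n}{l}\,[q^k,q^{n-l}]_{nd}\,z^l,
\]
so everything comes down to the Fischer pairings $[q^k,q^{n-l}]_{nd}=\langle q^k,(q^{n-l})^{\#\vee}\rangle_{nd}$, read in degree $nd$. Since $q$ is monic of degree $d$, the polynomial $q^k$ is supported in monomial degrees $0,\dots,kd$ with top coefficient $1$, while $(q^{n-l})^{\#\vee}$, coming from the degree-$(n-l)d$ polynomial $q^{n-l}$, is supported in degrees $ld,\dots,nd$ with the coefficient of $z^{ld}$ equal to $(-1)^{ld}$. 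Orthogonality of monomials for the Fischer product then forces the two supports to overlap for the pairing to be nonzero, i.e.\ $l\le k$; hence $\deg T_{q,n}(z^k)\le k$. For $l=k$ the supports meet only in degree $kd$, giving $[q^k,q^{n-k}]_{nd}=(-1)^{kd}/\binom{nd}{kd}$ and therefore leading coefficient $(-1)^k\binom{n}{k}(-1)^{kd}/\binom{nd}{kd}=(-1)^{k(d+1)}\binom{n}{k}/\binom{nd}{kd}$, which is nonzero. Thus $\deg T_{q,n}(z^k)=k$ exactly, which is (c), and $\deg T_{q,n}(f)=\deg f$ for all $f$ by linearity, which is (b).

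Finally, the computation shows that in the basis $1,z,\dots,z^n$ the matrix of $T_{q,n}$ is upper triangular with the numbers $(-1)^{k(d+1)}\binom{n}{k}/\binom{nd}{kd}$, $k=0,\dots,n$, on the diagonal, so these are its eigenvalues; since the binomial coefficients involved are positive, no eigenvalue vanishes and $T_{q,n}$ is invertible, which is (d). I expect the only real obstacle to be the bookkeeping in the pairing computation: the $\#$ must be taken in degree $nd$ even though $q^{n-l}$ has strictly smaller degree for $l\ge 1$, and it is exactly the resulting support $[ld,nd]$, together with the sign $(-1)^{ld}$ coming from $\vee$, that produces both the triangularity and the precise leading coefficient; once part~(a) has reduced matters to monomials, the rest is routine.
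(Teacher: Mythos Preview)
Your proposal is correct and follows essentially the same route as the paper: uniqueness of symmetrization for (a), a monomial-support/orthogonality argument in the Fischer product for (b) and (c), and upper-triangularity for the eigenvalues and (d). The only organisational difference is that you first pass to the bilinear bracket and expand $\prod_j(q-u_j)$ in elementary symmetric polynomials, reducing to the individual pairings $[q^k,q^{n-l}]_{nd}$, whereas the paper keeps $((q-u)^{\#\vee})^n$ intact in the sesquilinear Fischer product and tracks the $\ol u$-degree directly; the underlying computation is the same.
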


\begin{proof}
(a) By construction, $S_{q,n}(f)$ is symmetric and multiaffine of
degree $n$ in $u_1,\dots,u_n$ and satisfies $S_{q,n}(f)(z,\dots,z)=T_{q,n}(f)$. By the
uniqueness of the symmetrization, it therefore coincides with
$\Sym_n(T_{q,n}(f))$.

(b) and (c) For $k\in\{0,\dots,n\}$, we compute 
\[
S_{q,n}(z^k)(u,\dots,u)=\scp{q^k,\bigl((q-u)^{\#\vee}\bigr)^n}_{nd}.
\]
If $q(z)=\sum_{j=0}^{d}b_jz^j$ (where $b_d=1$), then 
\[
(q-u)^{\#\vee} =
(-1)^d\overline{(b_0-u)}z^d+\sum\nolimits_{j=1}^{d}(-1)^{d-j}\overline{b_j}z^{d-j}.
\]
Since monomials in $z$ of different degree are orthogonal and
every term of $q^k$ has degree at most $kd$ in $z$, we see that
$S_{q,n}(z^k)(u,\dots,u)$ is a polynomial of degree at most $k$ in
$u$. 
Since $T_{q,n}(z^k)=S_{q,n}(z^k)(z,\dots,z)$, it follows that
$T_{q,n}(z^k)$ has degree at most $k$ in $z$.

To find the leading coefficient, isolate all terms of degree $k$ in
$\ol{u}$ on the right-hand side of the inner product above: These are of the form
$\binom{n}{k}(-1)^{k(d+1)} \ol{u}^k z^{kd} (zr(z)+1)$ for some polynomial
$r\in\C[z]$. Since the left-hand side is of degree $kd$ in $z$ with
leading term $z^{kd}$, we have $\scp{q^k,z^{kd}zr(z)}_{nd}=0$. Thus
the coefficient of $u^k$ is found to be equal to
$\binom{n}{k}(-1)^{k(d+1)}\scp{z^{kd},z^{kd}}_{nd}=(-1)^{k(d+1)}\binom{n}{k}\bigl/\binom{nd}{kd}$,
as claimed. 

The equality $\deg(T_{q,n}(f))=\deg(f)$ is equivalent to the fact that
the matrix representing $T_{q,n}$ in the basis $(1,z,\dots,z^n)$ is
upper-triangular. Its diagonal entries and thus its eigenvalues are
exactly the leading coefficients of $T_{q,n}(z^k)$. 

(d) follows from (c).
\end{proof}

\noindent We say that two polynomials $f,g\in\C[z]_n$ are \emph{$q$-apolar}
if
\[
[f\circ q,g\circ q]_{nd}=0.
\]

The notion of $q$-apolarity is strongly related to the operator
$T_{q,n}$, as the following proposition shows.

\begin{Prop}\label{Prop:qApolarity}
We have 
\[
  [f\circ q,g\circ q]_{nd} = [T_{q,n}(f),g]_n
\]
  for all $f,g\in\C[z]_n$.
\end{Prop}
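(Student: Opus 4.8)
The plan is to reduce everything to a computation with the Fischer inner product, unwinding the definitions of both sides. The right-hand side is $[T_{q,n}(f),g]_n = \scp{T_{q,n}(f),g^{\#\vee}}_n$ by \ref{Bracket}. The left-hand side is $[f\circ q,g\circ q]_{nd} = \scp{f\circ q,(g\circ q)^{\#\vee}}_{nd}$. So the assertion is the identity
\[
\scp{f\circ q,(g\circ q)^{\#\vee}}_{nd} = \scp{T_{q,n}(f),g^{\#\vee}}_n.
\]

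First I would observe that it suffices to prove this when $g$ is monic of degree $n$: both sides are linear (indeed antilinear, but the antilinearity cancels consistently on both sides, or one simply checks linearity in $f$ and treats $g$ by density/spanning) — more carefully, both sides are bilinear in $(f,g)$ in the sense of the bracket, the bracket $[-,-]$ is bilinear and non-degenerate by \ref{Bracket}, and monic polynomials of degree exactly $n$ span $\C[z]_n$, so it is enough to verify the identity for such $g$. (One should double-check the edge behaviour when $\deg g<n$, but the $\#$-operation is defined relative to $n$ precisely so that everything stays continuous in the coefficients of $g$, so spanning suffices.) For $g=\prod_{k=1}^n(z-\mu_k)$ monic of degree $n$, the right-hand side becomes, by the remark in \ref{Bracket}, $[T_{q,n}(f),g]_n = \Sym_n(T_{q,n}(f))(\mu_1,\dots,\mu_n)$, and by Proposition \ref{Prop:propertiesT}(a) this equals $S_{q,n}(f)(\mu_1,\dots,\mu_n)$.

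Now I would unwind $S_{q,n}(f)(\mu_1,\dots,\mu_n)$ using its definition and the computation carried out just before \ref{Prop:propertiesT}: it equals
\[
\bigscp{f\circ q,\textstyle\prod_{j=1}^n(q-\mu_j)^{\#\vee}}_{nd}.
\]
So the whole claim comes down to the identity $(g\circ q)^{\#\vee} = \prod_{j=1}^n (q-\mu_j)^{\#\vee}$ when $g=\prod_{j=1}^n(z-\mu_j)$. This is where the bookkeeping lives, but it is exactly the kind of multiplicativity recorded in \ref{SharpCheck}: since $g\circ q = \prod_{j=1}^n (q(z)-\mu_j)$ and $q$ is monic of degree $d$, each factor $q-\mu_j$ is monic of degree $d$, the product $g\circ q$ is monic of degree $nd$, and $(fg)^\# = f^\# g^\#$ (with the $\#$ on the left taken relative to $nd$ and on the right relative to $d$, matching the degrees) together with the evident multiplicativity of $(\cdot)^\vee$ gives $(g\circ q)^{\#\vee}=\prod_{j=1}^n(q-\mu_j)^{\#\vee}$. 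The one point to be careful about is that $\#$ depends on the ambient degree; the factorization is consistent precisely because the degrees add up ($nd = \sum_j d$), so no spurious powers of $z$ appear.

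The main obstacle — really the only one — is making the reduction to monic $g$ airtight, i.e.\ checking that the $\#$-operation relative to a fixed degree $n$ does not misbehave when $g$ drops degree, so that both sides of the claimed identity genuinely agree as polynomial functions of the coefficients of $g$ and hence agreement on the spanning set of degree-$n$ monic polynomials forces agreement everywhere. Once that is granted, the chain
\[
[f\circ q,g\circ q]_{nd} = S_{q,n}(f)(\mu_1,\dots,\mu_n) = \Sym_n(T_{q,n}(f))(\mu_1,\dots,\mu_n) = [T_{q,n}(f),g]_n
\]
closes the proof, using only \ref{SharpCheck}, \ref{Bracket}, and Proposition \ref{Prop:propertiesT}(a).
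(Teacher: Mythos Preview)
Your proposal is correct and follows essentially the same route as the paper's proof: factor $g$ when it has full degree $n$, identify the left-hand side with $S_{q,n}(f)(\mu_1,\dots,\mu_n)$ via the multiplicativity of $\#\vee$ from \ref{SharpCheck}, and then invoke Proposition~\ref{Prop:propertiesT}(a) and \ref{Bracket} to reach the right-hand side. The only cosmetic difference is that the paper disposes of the case $\deg(g)<n$ in one word (``by continuity'') rather than by the spanning argument you outline; either works, for the reason you identify.
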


\begin{proof}
  If $\deg(g)=n$, let $g=\beta\prod_{j=1}^n (z-\mu_j)$. Using
  Prop.~\ref{Prop:propertiesT}(a) and \ref{FischerIP}, we find
\begin{align*} 
[f\circ q,g\circ q]_{nd} &= \beta\scp{f\circ q,\prod\nolimits_{j=1}^n
  (q-\mu_j)^{\#\vee}}_{nd} = \beta S_{q,n}(f)(\mu_1,\dots,\mu_n)\\
&= \beta \Sym_n(T_{q,n}(f))(\mu_1,\dots,\mu_n) = [T_{q,n}(f),g]_n.
\end{align*}
If $\deg(g)<n$, the identity also holds, by continuity.
\end{proof}

\begin{Example}
  Let 
\begin{align*}
q(z)&=z^2 + \beta z + \gamma\\
f(z)&=az^2 + bz + c
\end{align*}
Following the above computation, we find
\begin{align*}
S_{q,2}(f)(u_1,u_2) &= au_1u_2+\frac 16\biggl((2a\Delta-b)(u_1+u_2)+ a\Delta^2-2b\Delta \biggr)+c,
\end{align*}
where $\Delta=\beta^2-4\gamma$ is the discriminant of $q$. Hence
\begin{align*}
  T_{q,2}(f)(z) = az^2+\frac 16\biggl((4a\Delta-2b)z+ a\Delta^2-2b\Delta \biggr)+c.
\end{align*}

With respect to the basis $(1,z,z^2)$, the operator $T_{q,2}$ is therefore
represented by the uppper-triangular matrix
\[
\begin{bmatrix}
  1 & -\frac 13\Delta & \frac 16\Delta^2 \\
  0 &- \frac 13 & \frac 23\Delta\\
  0& 0& 1
\end{bmatrix}
\]
For a general cubic polynomial
\[
f(z)=az^3 + bz^2 + cz+d,
\]
direct computation shows
\[
  T_{q,3}(f)(z) = -az^3+\frac{1}{10}\bigl(2b-9a\Delta\bigr)z^2 +
  \frac{1}{10}\bigl(-3a\Delta^2+2b\Delta-2c\bigr)z + \frac{1}{20}\bigl(-a\Delta^3+2b\Delta^2-6c\Delta\bigr)+d.
\]
With respect to the basis $(1,z,z^2,z^3)$, the operator $T_{q,3}$ is therefore
represented by the upper-triangular matrix
\[
\begin{bmatrix}
   1  & -\frac{3}{10}\Delta & \frac{1}{10}\Delta^2 & -\frac{1}{20}\Delta^3 \\
   0  & -\frac 15 & \frac 15\Delta & -\frac{3}{10}\Delta^2\\
   0  & 0         & \frac 15 & -\frac{9}{10}\Delta\\
   0  & 0 & 0 & -1
\end{bmatrix}
\]
\end{Example} 
\begin{Example}
Let
\[
  q(z)=z^3 + \beta z^2 + \gamma z + \delta\\
\]
In the basis $(1,z,z^2)$ the operator
$T_{q,2}\colon\C[z]_2\to\C[z]_2$ is represented by the matrix
\[
\begin{bmatrix}
    1 & \frac{1}{30}\Gamma &
    -\frac{1}{15}\Delta\\
    0 & \frac{1}{10} & -\frac{1}{15}\Gamma\\
    0 & 0 & 1
\end{bmatrix}
\]
where $\Delta =
\beta^2\gamma^2-4\gamma^3-4\beta^3\delta+18\beta\gamma\delta-27\delta^2$
is the discriminant of $q$ and $\Gamma=2\beta^3-9\beta\gamma+27\delta$.

The operator $T_{q,3}$ in the basis $(1,z,z^2,z^3)$ is represented by
the matrix
\[
\begin{bmatrix}
  1 & \frac{1}{28}\Gamma &-\frac{1}{28}\Delta
  & 0\\
  0 &\frac{1}{28} & 0 & -\frac{3}{28}\Delta \\
  0 & 0 & \frac{1}{28} & -\frac{3}{28}\Gamma\\
  0 & 0 & 0 & 1
\end{bmatrix}
\]
\end{Example}

\noindent We are now ready for the generalized versions of the theorems of Grace and
Walsh. 

\begin{Thm}[Generalized Grace theorem] Let $D$ be a circular domain
  and $q\in\C[z]$ a monic polynomial of degree $d$. If
  $f,g\in\C[z]$ are $q$-apolar polynomials of the same degree and
  all zeros of $f$ lie in $q_\circ(D)$, then $g$ has at least one zero
  in $q(D)$.
\end{Thm}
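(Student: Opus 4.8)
The plan is to reduce directly to the classical Grace theorem (Corollary~\ref{Cor:Grace}) applied to the composed polynomials $f\circ q$ and $g\circ q$, without going through the operator $T_{q,n}$ at all. Write $n=\deg(f)=\deg(g)$ and $d=\deg(q)$. Since $q$ is monic of degree $d$, composition with $q$ multiplies degrees exactly: $f\circ q$ and $g\circ q$ both have degree $nd$. By the very definition of $q$-apolarity, the hypothesis says precisely that $[f\circ q,g\circ q]_{nd}=0$, i.e.\ that $f\circ q$ and $g\circ q$ are apolar in $\C[z]_{nd}$.

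The key observation is that the zero-location hypothesis on $f$ pulls back cleanly through $q$. If $z\in\C$ is a zero of $f\circ q$, then $q(z)$ is a zero of $f$, hence $q(z)\in q_\circ(D)$ by assumption; and by the definition $q_\circ(D)=\{u\st q^{-1}(u)\subset D\}$ this forces $z\in q^{-1}(q(z))\subset D$. Thus every zero of $f\circ q$ lies in $D$, which is itself a circular domain. Corollary~\ref{Cor:Grace} then applies to the apolar pair $f\circ q$, $g\circ q$ (of equal degree $nd$) and produces a point $w\in D$ with $(g\circ q)(w)=0$. Consequently $q(w)$ is a zero of $g$ and $q(w)\in q(D)$, which is exactly the asserted conclusion.

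I do not expect a serious obstacle here: the whole argument is the three-step chain (i) $q$-apolarity $=$ apolarity of $f\circ q$ and $g\circ q$, (ii) the set $q_\circ(D)$ is designed so that it pulls the zero-location hypothesis back to $D$, (iii) apply Grace and push the resulting zero forward by $q$. The only points deserving a line of care are that $f\circ q$ and $g\circ q$ genuinely have the same degree (this uses that $q$ is monic, so the top term of $q^n$ is not cancelled) and that Grace's theorem is being invoked for a possibly \emph{closed} circular domain $D$, both of which are already covered by the statements recalled in Section~\ref{Sec:Prelim}. One could instead symmetrize and invoke Walsh's theorem (Theorem~\ref{Thm:Walsh}) together with Proposition~\ref{Prop:qApolarity}, but the direct reduction above is shorter and avoids introducing $T_{q,n}$ into the argument.
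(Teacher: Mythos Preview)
Your proof is correct and follows essentially the same route as the paper's: both observe that $q$-apolarity is literally apolarity of $f\circ q$ and $g\circ q$, that the hypothesis on the zeros of $f$ pulls back to put all zeros of $f\circ q$ in $D$, and then push a resulting zero of $g\circ q$ forward by $q$. The only cosmetic difference is that the paper invokes Walsh's theorem (Theorem~\ref{Thm:Walsh}) on $\Sym_{nd}(f\circ q)$ at the zeros of $g\circ q$, whereas you quote Grace's theorem (Corollary~\ref{Cor:Grace}) --- but since Corollary~\ref{Cor:Grace} is derived from Walsh in exactly that way, the two arguments are the same in substance.
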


\begin{proof}
  If $y_{11},\dots,y_{nd}$ are the zeros of $g\circ q$, then
  $[f\circ q,g\circ q]_{nd}=\Sym_{nd}(f\circ q)(y_{11},\dots,y_{nd})$
  by \ref{Bracket}. So if $[f\circ q,g\circ q]_{nd}=0$, then
  $(y_{11},\dots,y_{nd})$ is a zero of $\Sym_{nd}(f\circ q)$. Since
  all zeros of $f$ are in $q_\circ(D)$, all zeros of $f\circ q$
  are in $D$, and therefore $y_{jk}\in D$ for some $j,k$ by Walsh's
  theorem (Thm.~\ref{Thm:Walsh}). Hence $q(y_{jk})\in q(D)$ is a zero
  of $g$.
\end{proof}

\begin{Thm}[Generalized Walsh theorem]\label{Thm:GenWalsh} Let $D$ be a circular domain
  and $q\in\C[z]$ a monic polynomial. Let $f\in\C[z]$ be a polynomial
  of degree $n$.
  \begin{enumerate}  
  \item Assume that all zeros of $f$ lie in
  $q_\circ(D)$. Then all zeros of $T_{q,n}(f)$ lie in $q(D)$ and if
  $(u_1,\dots,u_n)\in\C^n$ is a zero of
  $S_{q,n}(f)=\Sym_n(T_{q,n}(f))$, then $u_k\in q(D)$ for some $k$.
  \item Assume that $f$ does not vanish in $q(D)$. Then $T_{q,n}(f)$
    does not vanish in
  $q_\circ(D)$ and $S_{q,n}(f)$ does not vanish in $(q_\circ(D))^n$.
  \end{enumerate}
\end{Thm}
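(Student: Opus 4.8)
The plan is to reduce everything to the classical Walsh theorem (Theorem~\ref{Thm:Walsh}) applied to the composed polynomial $f\circ q$, and to deduce part (2) from part (1) by passing to the complementary circular domain.

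First I would record the elementary observation underlying part (1): since the zero set of $f\circ q$ is $q^{-1}(\text{zero set of }f)$, and $\lambda\in q_\circ(D)$ means precisely $q^{-1}(\lambda)\subset D$, the hypothesis ``all zeros of $f$ lie in $q_\circ(D)$'' is equivalent to ``all zeros of $f\circ q$ lie in $D$''. Because $q$ is monic of degree $d$ and $\deg f=n$, the polynomial $f\circ q$ has degree exactly $nd$; dividing it by its nonzero leading coefficient does not change the zero set of $\Sym_{nd}(f\circ q)$, so Walsh's theorem applies to it in degree $nd$.

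Next, for part (1), let $(u_1,\dots,u_n)\in\C^n$ be a zero of $S_{q,n}(f)$ and let $y_{j1},\dots,y_{jd}$ be the roots of $q(z)-u_j$, so that $(y_{11},\dots,y_{nd})=Q^{-1}(u_1,\dots,u_n)$. By the computation preceding the definition of $S_{q,n}$, the restriction of $\Sym_{nd}(f\circ q)$ to this fiber equals $S_{q,n}(f)(u_1,\dots,u_n)=0$, so $(y_{11},\dots,y_{nd})$ is a zero of $\Sym_{nd}(f\circ q)$. Walsh's theorem then yields $y_{jk}\in D$ for some $j,k$, whence $u_j=q(y_{jk})\in q(D)$. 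For the assertion about $T_{q,n}(f)$: by Proposition~\ref{Prop:propertiesT} we have $\deg T_{q,n}(f)=n$ and $S_{q,n}(f)=\Sym_n(T_{q,n}(f))$, so a zero $\lambda$ of $T_{q,n}(f)$ gives a zero $(\lambda,\dots,\lambda)$ of $S_{q,n}(f)$, and the previous argument puts $\lambda\in q(D)$.

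Finally, for part (2), the set $D'=\C\setminus D$ is again a circular domain, and by the identities of Section~\ref{Sec:PolyImages} one has $q_\circ(D')=\C\setminus q(D)$ and $q(D')=\C\setminus q_\circ(D)$. The hypothesis that $f$ has no zero in $q(D)$ says exactly that all zeros of $f$ lie in $\C\setminus q(D)=q_\circ(D')$, so part (1) applied to $D'$ shows that every zero of $T_{q,n}(f)$ lies in $q(D')=\C\setminus q_\circ(D)$, i.e.\ $T_{q,n}(f)$ has no zero in $q_\circ(D)$; likewise any zero $(u_1,\dots,u_n)$ of $S_{q,n}(f)$ has some $u_k\in q(D')=\C\setminus q_\circ(D)$, so $(u_1,\dots,u_n)\notin (q_\circ(D))^n$. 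I do not expect a genuine obstacle: the argument is a formal consequence of Walsh's theorem, the fiberwise identity for $S_{q,n}$, and the complementation identities for $q_\circ$. The only points needing care are the degree/monicity bookkeeping required to invoke Walsh's theorem in degree $nd$, and keeping the conventions for $q_\circ(D)$ (multiplicities, open versus closed boundary behaviour) consistent with Section~\ref{Sec:PolyImages}, where they are already fixed.
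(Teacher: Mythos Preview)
Your proposal is correct and follows essentially the same route as the paper's proof: reduce part (1) to the classical Walsh theorem applied to $f\circ q$ via the fiberwise identity defining $S_{q,n}$, and obtain part (2) by passing to the complementary circular domain using the identities $q(\C\setminus D)=\C\setminus q_\circ(D)$ and $q_\circ(\C\setminus D)=\C\setminus q(D)$. Your version is in fact slightly more explicit than the paper's, since you spell out the monicity/degree bookkeeping needed to invoke Walsh in degree $nd$ and separately deduce the statement about zeros of $T_{q,n}(f)$ by restricting $S_{q,n}(f)$ to the diagonal.
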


\begin{proof}
  (1) Assume that $S_{q,n}(u_1,\dots,u_n)=0$. By definition, $S_{q,n}(f)$
  is the restriction of $\Sym_{nd}(f\circ q)$ to the fibers of $q$, so
  $\Sym_{nd}(f\circ q)(y_{11},\dots,y_{nd})=0$, where
  $\{y_{j1},\dots,y_{jd}\}=q^{-1}(u_j)$ for $j=1,\dots,n$. Since all
  zeros of $f$ are in $q_\circ(D)$, the zeros of $f\circ q$ are in
  $D$. By Walsh's theorem (Thm.~\ref{Thm:Walsh}), this implies
  $y_{jk}\in D$ for some $j,k$ and hence $u_j=q(y_{jk})\in q(D)$, as
  claimed.

  (2) Apply (1) to the circular domain
  $\C\setminus D$ and use the identities $q(\C\setminus D)=\C\setminus
  q_\circ(D)$ and $q_\circ(\C\setminus D)=\C\setminus q(D)$.
\end{proof}

\begin{Cor}\label{Cor:GeneralizedBernstein}
  Let $f,g\in\C[z]$ be monic of degree $n$ without common zeros in $q(D)$.
  \begin{compactenum}\item 
    Given $y=(y_1,\dots,y_n)\in q_\circ(D)$ with
    $S_{q,n}(g)(y)\neq 0$, there exists
    $z\in q(D)$ with $g(z)\neq 0$ such that
\[
  \frac{S_{q,n}(f)(y)}{S_{q,n}(g)(y)}=\frac{f(z)}{g(z)}.
\]
  \item If $|f/g| \ge 1$ on $q(D)$, then 
\[
  \biggl|\frac{S_{q,n}(f)}{S_{q,n}(g)}\biggr| \ge
  1 \quad\text{on }\bigl(q_\circ(D)\bigr)^n.
\]
  \end{compactenum}
\end{Cor}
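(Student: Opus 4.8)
The plan is to reduce both assertions to the Generalized Walsh theorem (Thm.~\ref{Thm:GenWalsh}) applied to a suitable auxiliary polynomial, mimicking the way the classical Bernstein–Walsh inequalities are deduced from the Walsh Coincidence Theorem. For part (1), fix $y=(y_1,\dots,y_n)\in q_\circ(D)^n$ with $S_{q,n}(g)(y)\neq 0$ and set $\mu = S_{q,n}(f)(y)/S_{q,n}(g)(y)$. Consider the polynomial $h = f - \mu g \in \C[z]_n$; since $f$ and $g$ are both monic of degree $n$ this is a polynomial of degree at most $n-1$, but more importantly $S_{q,n}$ is linear, so $S_{q,n}(h)(y) = S_{q,n}(f)(y) - \mu S_{q,n}(g)(y) = 0$. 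Now I would argue by contraposition: if $h$ had no zero in $q(D)$, then by Thm.~\ref{Thm:GenWalsh}(2) the symmetrization $S_{q,n}(h)$ would not vanish on $q_\circ(D)^n$, contradicting $S_{q,n}(h)(y)=0$. Hence there exists $z \in q(D)$ with $h(z)=0$, i.e.\ $f(z) = \mu g(z)$. It remains to see $g(z)\neq 0$: if $g(z)=0$ then also $f(z)=0$, contradicting that $f,g$ have no common zero in $q(D)$; and then dividing gives $f(z)/g(z) = \mu = S_{q,n}(f)(y)/S_{q,n}(g)(y)$, as required.

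The only delicate point in part (1) is the degree bookkeeping: $h=f-\mu g$ may genuinely have degree $<n$, and Thm.~\ref{Thm:GenWalsh} is stated for a polynomial "of degree $n$." This is harmless because $S_{q,n}$ and the conclusion of \ref{Thm:GenWalsh}(2) make sense for all of $\C[z]_n$, and indeed the proof of \ref{Thm:GenWalsh} via Walsh's theorem and Prop.~\ref{Prop:propertiesT}(a) only uses multiaffine symmetrization on $\C^{nd}$, which is unaffected by the leading coefficient vanishing. I would simply note this, or alternatively observe that the Generalized Walsh theorem extends to $f\in\C[z]_n$ of degree $<n$ by the same continuity argument already used in Prop.~\ref{Prop:qApolarity} and in the proof of Thm.~\ref{Thm:GenWalsh} itself (working with $f_\epsilon = f+\epsilon z^n$ and letting $\epsilon\to 0$, using that the zero locus of $S_{q,n}(f)$ varies upper-semicontinuously).

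Part (2) is then immediate from part (1). Suppose $|f/g|\ge 1$ on $q(D)$ and let $y\in q_\circ(D)^n$. If $S_{q,n}(g)(y)=0$ there is nothing to prove (the inequality holds in the extended sense, or one restricts to the dense open set where $S_{q,n}(g)$ is nonzero); otherwise part (1) furnishes $z\in q(D)$ with $g(z)\neq 0$ and
\[
  \biggl|\frac{S_{q,n}(f)(y)}{S_{q,n}(g)(y)}\biggr| = \biggl|\frac{f(z)}{g(z)}\biggr| \ge 1,
\]
the last inequality being the hypothesis evaluated at $z\in q(D)$. Since $y\in q_\circ(D)^n$ was arbitrary, this gives $|S_{q,n}(f)/S_{q,n}(g)|\ge 1$ on $q_\circ(D)^n$.

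I expect the main obstacle to be purely expository rather than mathematical: making the passage through Thm.~\ref{Thm:GenWalsh} clean despite the degree drop of $f-\mu g$, and being careful that "no common zeros in $q(D)$" is exactly what is needed to rule out $g(z)=0$ at the coincidence point $z$. Everything else is a direct application of linearity of $S_{q,n}$ together with the already-established generalized Walsh theorem.
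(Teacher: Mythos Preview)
Your proof is correct and follows essentially the same approach as the paper: both apply Thm.~\ref{Thm:GenWalsh}(2) to the auxiliary polynomial $f-\alpha g$, using linearity of $S_{q,n}$ and the no-common-zeros hypothesis to handle the case $g(z)=0$. The only cosmetic difference is that the paper phrases (1) contrapositively (if $\alpha\notin (f/g)(q(D))$ then $\alpha\notin (S_{q,n}(f)/S_{q,n}(g))(q_\circ(D)^n)$), whereas you fix $y$ and set $\mu=S_{q,n}(f)(y)/S_{q,n}(g)(y)$ directly; your discussion of the degree drop in $f-\mu g$ is a point the paper passes over silently.
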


\begin{proof}
  (1) Suppose that $\alpha\in\C$ is not in
  $(f/g)(q(D))$. Since $f$ and $g$ have no common zeros in $q(D)$,
  this implies that $f-\alpha g$ does
  not vanish anywhere in $q(D)$. Then
  $S_{q,n}(f-\alpha g)=S_{q,n}(f)-\alpha S_{q,n}(g)$ does not vanish
  anywhere in $(q_\circ(D))^n$ by
  Thm.~\ref{Thm:GenWalsh}(2), so $\alpha$ is not assumed by the rational
  function $S_{q,n}(f)/S_{q,n}(g)$ on $(q_\circ(D))^n$.  
  (2) follows immediately from (1).
\end{proof}

\begin{Example}
  Let $q(z)=z^3+\gamma z$ where $\gamma$ is real and positive
  (c.f.~Example \ref{Example:Cubic}). The region $q_\circ(\D)$ contains
  the disc $B_0(1+\gamma)$, while $q(D)$ is contained in the disc
  $B_0(1-\gamma)$.  Now a weaker version of
  Cor.~\ref{Cor:GeneralizedBernstein} says that given $f,g\in\C[z]_n$
  such that $|(f/g)(z)|\ge 1$ for all $z$ with $|z|>1-\gamma$, we must
  have $|(S_{q,n}(f)/S_{q,n}(g))(y_1,\dots,y_n)|$ for all
  $y_1,\dots,y_n$ with $|y_k|>1+\gamma$ for $k=1,\dots,n$.\\
  For instance, if $a,b,c,d\in\C$ are such that
\[
 |az^3 + bz^2 + cz + d|\ge 1\quad\text{whenever}\quad |z|>1-\gamma
\]
it follows that
\[
\bigl|\bigl(1+\gamma^3/7\bigr)ay_1y_2y_3 +
\bigl(1/84+\gamma^3/7\bigr)b(y_1y_2+y_1y_3+y_2y_3)+ 1/84 c
(y_1+y_2+y_3) + d\bigr|\ge 1
\]
whenever $|y_1|,|y_2|,|y_3|>1+\gamma$. 
\end{Example}

\section{Skew eigenfunctions}\label{Sec:SkewEigenfunctions}

Let $\sH$ be a complex Hilbert space. Recall that a map
$\Phi\colon\sH\to\sH$ is called \emph{antilinear} if $\Phi(x+y)=\Phi(x)+\Phi(y)$
and $\Phi(\alpha x)=\ol{\alpha}\Phi(x)$ hold for all $x,y\in\sH$,
$\alpha\in\C$. If $\scp{\Phi x,\Phi y}=\scp{y,x}$ holds for all $x,y\in\sH$,
then $\Phi$ is called \emph{isometric}.

\begin{Def} Let $\sH$ be a complex Hilbert space.  A map
  $C\colon\sH\to\sH$ is called an \emph{antilinear conjugation} if it
  is antilinear, isometric and satisfies
\[
C^2 = \epsilon\cdot{\rm id}, \text{ where }\epsilon\in\{-1,1\}.
\]
\end{Def}

\begin{Lemma}\label{Lemma:AntilinearIsometry}
  Let $\sH$ be a complex Hilbert space of finite dimension $n$ and let
  $J\colon\sH\to\sH$ be an antilinear isometry with $J^2=\id$. Then there exists an
  orthonormal basis $f_1,\dots,f_n$ such that $Jf_k=f_k$ for
  $k=1,\dots,n$. 
\end{Lemma}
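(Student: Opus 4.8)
\emph{Plan.} The statement is the classical fact that an antilinear conjugation with $J^2=\id$ has a real form; the plan is to analyze the real subspace of fixed vectors
\[
\sH^J=\{x\in\sH\st Jx=x\},
\]
show it is a Euclidean space of real dimension $n$ that spans $\sH$ over $\C$, and then apply Gram--Schmidt inside $\sH^J$.

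\emph{Step 1: the fixed space is a real form.} For any $x\in\sH$, antilinearity and $J^2=\id$ give $J(x+Jx)=Jx+x$ and $J\bigl(i(x-Jx)\bigr)=\ol i\,(Jx-x)=i(x-Jx)$, so both $x+Jx$ and $i(x-Jx)$ lie in $\sH^J$. Since $x=\tfrac12(x+Jx)-\tfrac{i}{2}\cdot i(x-Jx)$, we get $\sH=\sH^J+i\sH^J$ as real vector spaces. This sum is direct: if $y\in\sH^J$ and also $iy\in\sH^J$, then $iy=J(iy)=\ol i\,Jy=-iy$, hence $y=0$. So $\sH=\sH^J\oplus i\sH^J$ and $\dim_\R\sH^J=n$.

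\emph{Step 2: the inner product is real on $\sH^J$.} For $x,y\in\sH^J$ the isometry hypothesis gives $\scp{x,y}=\scp{Jx,Jy}=\scp{y,x}=\ol{\scp{x,y}}$, so $\scp{x,y}\in\R$. Thus $\scp{-,-}$ restricts to a symmetric $\R$-bilinear form on $\sH^J$ which is positive definite (being the restriction of the Hermitian inner product), making $\sH^J$ a Euclidean space of dimension $n$. Applying Gram--Schmidt produces an $\R$-orthonormal basis $f_1,\dots,f_n$ of $\sH^J$; by construction $Jf_k=f_k$, and since $\scp{f_j,f_k}$ is already real, $\scp{f_j,f_k}=\delta_{jk}$ in the Hermitian inner product as well.

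\emph{Step 3: it is a complex orthonormal basis.} The $f_k$ span $\sH$ over $\C$ because $\sH=\sH^J+i\sH^J$ and they span $\sH^J$ over $\R$; they are $\C$-linearly independent because a relation $\sum_k(\alpha_k+i\beta_k)f_k=0$ with $\alpha_k,\beta_k\in\R$ forces $\sum_k\alpha_kf_k=0$ and $\sum_k\beta_kf_k=0$ by directness of $\sH^J\oplus i\sH^J$, whence all $\alpha_k=\beta_k=0$. Together with Step 2 this gives the desired orthonormal basis fixed by $J$. Finite-dimensionality enters only to guarantee the Gram--Schmidt step terminates; the only real point requiring attention is the bookkeeping of conjugation signs in the antilinear identities of Step 1, and beyond that the argument is routine, so I do not expect a serious obstacle.
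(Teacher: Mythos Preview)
Your proof is correct. The approach, however, differs from the paper's. The paper argues by induction on $n$: given any unit vector $h$, either $g=\tfrac12(h+Jh)$ is a nonzero fixed vector, or $Jh=-h$ and then $ih$ is fixed; having produced one fixed unit vector $f_1$, the orthogonal complement $f_1^\perp$ is $J$-invariant (by the isometry property) and the argument recurses. You instead identify the entire real form $\sH^J$ at once, show it has real dimension $n$ and carries the restricted inner product as a real Euclidean structure, and then run Gram--Schmidt inside $\sH^J$. Your route is more structural and makes the ``real form'' picture explicit, which is arguably more informative; the paper's inductive argument is shorter and avoids the bookkeeping of the direct-sum decomposition. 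Both are standard proofs of this classical fact.
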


\begin{proof}
  Take any vector $h\in\sH$ with $||h||=1$. Then $g=(h+Jh)/2$ satisfies
  $Jg=g$. If $g\neq 0$, we put $f_1=g$. If $g=0$, this means $Jh=-h$.
  We put $f_1=ih$, so that $Jf_1=Jih=-iJh=ih=f_1$. Now since $J$
  is an isometry, the orthogonal complement $h_1^\perp$ is
  $J$-invariant and the claim follows by induction.
\end{proof}

\begin{Thm}\label{Thm:skew_eigenvalues}
  Let $\sH$ be a complex Hilbert space of finite dimension $n$ equipped
  with an antilinear conjugation $C$. Let $T\colon\sH\to\sH$ be an
  invertible linear map satisfying
\[
  T^\ast=CTC.
\]
\begin{enumerate}
\item There exists an antilinear conjugation $J$ with $J^2={\rm id}$
  that commutes with $|T|$, where $|T|=\sqrt{T^\ast T}$, and satisfies $T=CJ|T|$.
\item There exists an orthonormal basis $f_1,\dots,f_n$ of
$\sH$ such that
\[
Tf_k = \lambda_k Cf_k \quad\text{for }k=1,\dots,n,
\]
where $\lambda_1,\dots,\lambda_n\in\R_+$ are the singular values of $T$.
\end{enumerate}
\end{Thm}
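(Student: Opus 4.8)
The plan is to use the polar decomposition of $T$ and transport the identity $T^\ast=CTC$ to its unitary part. Since $T$ is invertible, write $T=UP$ with $U$ unitary and $P=|T|=\sqrt{T^\ast T}$ positive definite; then $T^\ast=PU^\ast$, so the hypothesis reads $PU^\ast=CUPC$. First I would record the elementary consequences of $C$ being an antilinear conjugation: $C^{-1}=\epsilon C$, and applying $C$ on the left and right of $T^\ast=CTC$ gives $CT^\ast C=T$. A one-line manipulation (insert $C^{-1}C$ between $T^\ast$ and $T$) then yields
\[
  C\,T^\ast T\,C=\epsilon\,TT^\ast,\qquad\text{equivalently}\qquad CP^2C=\epsilon\,UP^2U^\ast .
\]

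Next I would introduce the antilinear isometry $V=U^\ast C$, whose inverse is $V^{-1}=C^{-1}U=\epsilon CU$. Using the identity just displayed,
\[
  VP^2V^{-1}=\epsilon\,U^\ast\bigl(CP^2C\bigr)U=P^2 ,
\]
so $V$ commutes with $P^2$. Since $P$ is positive definite, the eigenspaces of $P^2$ are exactly the eigenspaces of $P$, and each is $V$-invariant; hence $V$ commutes with $P=|T|$. On the other hand, multiplying $PU^\ast=CUPC$ on the right by $C$ and using $C^2=\epsilon\,\id$ gives $PV=\epsilon\,CUP=V^{-1}P$; combined with $PV=VP$ and the invertibility of $P$ this forces $V^2=\id$. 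Now set $J=V^{-1}=\epsilon CU$. Then $J$ is antilinear and isometric, $J^2=\id$, it commutes with $|T|$, and $CJ=U$ (again because $C^2=\epsilon\,\id$), so $T=UP=CJ\,|T|$. This proves part (1).

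For part (2), $J$ commutes with the self-adjoint operator $|T|$, so each eigenspace of $|T|$ is $J$-invariant; on such an eigenspace $J$ restricts to an antilinear isometry squaring to the identity, so Lemma \ref{Lemma:AntilinearIsometry} furnishes an orthonormal basis of that eigenspace fixed by $J$. Collecting these bases over all eigenvalues of $|T|$ produces an orthonormal basis $f_1,\dots,f_n$ of $\sH$ with $Jf_k=f_k$ and $|T|f_k=\lambda_k f_k$, where $\lambda_1,\dots,\lambda_n\in\R_+$ are precisely the eigenvalues of $|T|$, i.e.\ the singular values of $T$. Since the $\lambda_k$ are real and $C,J$ are antilinear,
\[
  Tf_k=CJ\,|T|\,f_k=\lambda_k\,CJf_k=\lambda_k\,Cf_k ,
\]
which is the assertion.

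The only genuinely delicate part is the antilinear bookkeeping in part (1): one must keep careful track of the sign $\epsilon$ throughout — it is the cancellation of two factors of $\epsilon$ that makes $V^2=\id$, hence $J^2=\id$, hold on the nose — as well as the identifications $C^{-1}=\epsilon C$ and $\epsilon CU=V^{-1}$. Once these are in place, the proof that $V$ commutes with $|T|$ and the invocation of Lemma \ref{Lemma:AntilinearIsometry} are routine.
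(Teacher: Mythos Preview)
Your proof is correct and follows essentially the same route as the paper: polar decomposition $T=U|T|$, the conjugation $J=U^\ast C=\epsilon CU$, and Lemma~\ref{Lemma:AntilinearIsometry} applied on the eigenspaces of $|T|$ for part~(2). The only tactical difference is in establishing the properties of $J$ in part~(1): the paper rewrites $T=CT^\ast C=(CU^\ast C)(\epsilon CU|T|U^\ast C)$, checks that the second factor is positive definite, and then invokes the \emph{uniqueness} of the polar decomposition to conclude $U=CU^\ast C$ and $|T|=\epsilon CU|T|U^\ast C$, from which $J^2=\id$ and $J|T|=|T|J$ drop out immediately. You instead verify directly that $V=U^\ast C$ commutes with $|T|^2$ (hence with $|T|$, since the eigenvalues are real and positive) and combine this with $PV=V^{-1}P$ to force $V^2=\id$. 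Your computation trades the positivity check for a bit more $\epsilon$-bookkeeping; both arguments are short and yield the same $J$.
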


\begin{proof}
  (1) Write $C^2=\epsilon\cdot{\rm id}$ as above and let $T=U|T|$ be
  the polar decomposition of $T$, where $U$ is unitary. Then $T^\ast =
  |T|U^\ast$ and $T=CT^\ast C=(CU^\ast C)(\epsilon CU|T|U^\ast
  C)$. Put $U'=CU^\ast C$ and $P=\epsilon CU|T|U^\ast C$. Since $C$
  and $U$ are both unitary, so is $U'$. Furthermore, since $|T|$ is
  positive definite, we have
\[
\scp{Px,x}=\scp{\epsilon CU|T|U^\ast C x,x} = \scp{CU|T|U^\ast Cx,C^2x} =
\scp{Cx, U|T|U^\ast Cx}>0
\]
  for any $x\neq 0$ in $\sH$, which shows that $P$ is also positive
  definite. By the uniqueness of the polar decomposition, $T=U|T|=U'P$
  implies $U=U'$ and $|T|=P$. The first equality means $U=CU^\ast C$,
  hence $\epsilon CU=U^\ast C$. Put $J = U^\ast C$, then $J^2 =
  \epsilon C UU^\ast C = {\rm id}$. Also, $J$ is antilinear and
  $\scp{Jx,Jy}=\scp{U^\ast Cx, U^\ast Cy}=\scp{Cx,Cy}=\scp{y,x}$,
  hence $J$ is isometric. Thus $J$ is an antilinear conjugation.  It
  also commutes with $|T|$, since $J|T|J = \epsilon CU |T| U^\ast C =
  P=|T|$. 

  (2) Let $\lambda_1,\dots,\lambda_n$ be the singular values of $T$, i.e.~the
  eigenvalues of $|T|$. Applying Lemma \ref{Lemma:AntilinearIsometry} for
  the restriction of $J$ to each eigenspace of $|T|$, we choose an
  orthonormal basis $f_1,\dots,f_n$ of corresponding eigenvectors of
  $|T|$ each of which is fixed under $J$. We then have
\[
  Tf_k = CJ|T|f_k = C|T|f_k  = \lambda_k Cf_k
\]
  for $k=1,\dots,n$, as desired. 
\end{proof}

\noindent We apply the above result in the case $\sH=\C[z]_n$ and
obtain the following statement. 

\begin{Cor}
  If $q\in\C[z]$ is monic and of even degree, there exists a basis
  $f_0,\dots,f_n$ of $\C[z]_n$ which is orthonormal with respect to
  the Fischer inner product and satisfies
\[
[f_j\circ q,f_k\circ q]_{nd} = \lambda_j\delta_{jk},
\]
for all $j,k=0,\dots,n$, where $\lambda_0,\dots,\lambda_n$ are the
singular values of the operator $T_{q,n}$.
\end{Cor}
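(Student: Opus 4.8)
The plan is to apply Theorem~\ref{Thm:skew_eigenvalues} to the Hilbert space $\sH=\C[z]_n$, of dimension $n+1$, equipped with the Fischer inner product, taking $T=T_{q,n}$ and letting $C\colon\sH\to\sH$ be the map $f\mapsto f^{\#\vee}$. First I would check that $C$ is an antilinear conjugation on $\C[z]_n$. Antilinearity is immediate from $f^{\#\vee}=\sum_{k=0}^n(-1)^k\ol{a_{n-k}}z^k$, and the same formula (or \ref{SharpCheck}) gives $C^2=(-1)^n\cdot\id$, so the axiom $C^2=\epsilon\cdot\id$ holds with $\epsilon=(-1)^n\in\{-1,1\}$. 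Isometry of $C$ follows from \ref{FischerIP}(\ref{FIP:SharpCheck}): applying that identity with $f$ replaced by $Cf$ and using $(Cf)^{\#\vee}=C^2f=(-1)^nf$ gives $\scp{Cf,Cg}_n=\scp{Cf,g^{\#\vee}}_n=(-1)^n\scp{g,(Cf)^{\#\vee}}_n=\scp{g,f}_n$.

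The main step is the intertwining relation $T_{q,n}^\ast=C\,T_{q,n}\,C$, and this is where the evenness of $d=\deg q$ is used. By the definition of the bracket and Proposition~\ref{Prop:qApolarity},
\[
\scp{T_{q,n}(f),C(g)}_n=[T_{q,n}(f),g]_n=[f\circ q,g\circ q]_{nd}.
\]
Since $d$ is even, $(-1)^{nd}=1$, so by \ref{Bracket} the right-hand side equals $(-1)^{nd}[g\circ q,f\circ q]_{nd}=[g\circ q,f\circ q]_{nd}=[T_{q,n}(g),f]_n=\scp{T_{q,n}(g),C(f)}_n$. Hence $\scp{T_{q,n}(f),C(g)}_n=\scp{T_{q,n}(g),C(f)}_n$ for all $f,g\in\C[z]_n$. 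Given an arbitrary $h\in\C[z]_n$, write $h=C(g)$ with $g=(-1)^nC(h)$; applying \ref{FischerIP}(\ref{FIP:SharpCheck}) once more, together with the antilinearity of $C$ and $\ol{(-1)^n}=(-1)^n$, the previous identity rearranges to $\scp{T_{q,n}(f),h}_n=\scp{f,C\,T_{q,n}\,C(h)}_n$, that is, $T_{q,n}^\ast=C\,T_{q,n}\,C$.

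Now Theorem~\ref{Thm:skew_eigenvalues}(2) applies and yields an orthonormal basis $f_0,\dots,f_n$ of $\C[z]_n$ (reindexed from $1,\dots,n+1$) with $T_{q,n}(f_k)=\lambda_k\,C(f_k)$ for all $k$, where $\lambda_0,\dots,\lambda_n\ge 0$ are the singular values of $T_{q,n}$; in fact $\lambda_k>0$ because $T_{q,n}$ is invertible by Proposition~\ref{Prop:propertiesT}(d). To conclude, I would compute directly, using Proposition~\ref{Prop:qApolarity}, the eigenrelation, the definition of the bracket, and the isometry of $C$:
\[
[f_j\circ q,f_k\circ q]_{nd}=[T_{q,n}(f_j),f_k]_n=\lambda_j\,[f_j^{\#\vee},f_k]_n=\lambda_j\scp{f_j^{\#\vee},f_k^{\#\vee}}_n=\lambda_j\scp{Cf_j,Cf_k}_n=\lambda_j\scp{f_k,f_j}_n=\lambda_j\delta_{jk}.
\]
The only genuine obstacle is establishing $T_{q,n}^\ast=C\,T_{q,n}\,C$; everything else is bookkeeping with the $\#\vee$-operation. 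It is worth recording that if $d$ were odd one would only obtain $T_{q,n}^\ast=(-1)^{nd}C\,T_{q,n}\,C$, an intertwining up to sign, which is precisely why the hypothesis restricts $q$ to even degree.
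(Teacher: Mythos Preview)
Your proof is correct and follows essentially the same route as the paper: set $C=(\cdot)^{\#\vee}$ on $\C[z]_n$, verify it is an antilinear conjugation with $C^2=(-1)^n\id$, use Proposition~\ref{Prop:qApolarity} together with the symmetry of $[-,-]_{nd}$ (valid because $d$ is even) to establish $T_{q,n}^\ast=CT_{q,n}C$, apply Theorem~\ref{Thm:skew_eigenvalues}, and read off the orthogonality relation. The only cosmetic difference is that the paper derives $T^\ast=CTC$ by starting from $\scp{Tf,g}_n$ directly, whereas you first prove the symmetric identity $\scp{Tf,Cg}_n=\scp{Tg,Cf}_n$ and then unwind it; the algebra is equivalent.
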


In other words, the base-polynomials furnished by
Thm.~\ref{Thm:skew_eigenvalues} are both orthonormal and pairwise
$q$-apolar.

\begin{proof}
  Let $d=\deg(q)$ and put
  \[
  C\colon \C[z]_n\to\C[z]_n,\ p\mapsto p^{\#\vee}.
  \]
  and $T=T_{q,n}$. It is easily checked that $C$ is indeed an
  antilinear conjugation with $\epsilon=(-1)^n$. The identity
  $T^\ast=CTC$ in the hypothesis of Thm.~\ref{Thm:skew_eigenvalues}
  then follows from Prop.~\ref{Prop:qApolarity}: For all
  $f,g\in\C[z]_n$, we have
  \begin{align*}
    \scp{Tf,g}_n & = [Tf,\epsilon C g]_n = [f\circ q,(\epsilon
    Cg)\circ q]_{nd} \\
    & = [(\epsilon C g)\circ q, f\circ q]_{nd}=[T(\epsilon C
    g),f]_n = \epsilon[f,T(\epsilon C g)]_n =\\
    & = \scp{f,CTCg}_n,
  \end{align*}
  hence $T^\ast=CTC$, as claimed. (Note that $[-,-]_{nd}$ is
  symmetric, since $d$ is even.)

  Now the $f_0,\dots,f_n$ of $\C[z]_n$ given by
  Thm.~\ref{Thm:skew_eigenvalues}(2) indeed satisfy
  \begin{align*}
    [f_j\circ q,f_k\circ q]_n &= [T f_j, f_k]_n = \scp{Tf_j,Cf_k}_n\\
    &=\scp{\lambda_j Cf_j,Cf_k}_n=\lambda_j\delta_{jk}
  \end{align*}
  by Prop.~\ref{Prop:qApolarity}, since $C$ is
  isometric.
\end{proof}

\begin{bibdiv}
  \begin{biblist}
\bib{Be26}{book}{
    author = {S. {Bernstein}},
    title = {{Le\c{c}ons sur les propri\'et\'es extr\'emales et la meilleure approximation des fonctions analytiques d'une variable r\'eelle.}},
    date = {1926},
    publisher = {{X + 208 p. Paris, Gauthier-Villars. (Collection de monographies sur la th\'eorie des fonctions)}},
}

\bib{Ca13}{article}{
   author={Carath{\'e}odory, C.},
   title={\"Uber die gegenseitige Beziehung der R\"ander bei der konformen
   Abbildung des Inneren einer Jordanschen Kurve auf einen Kreis},
   journal={Math. Ann.},
   volume={73},
   date={1913},
   number={2},
   pages={305--320},
}

\bib{Co95}{book}{
   author={Conway, J. B.},
   title={Functions of one complex variable. II},
   series={Graduate Texts in Mathematics},
   volume={159},
   publisher={Springer-Verlag, New York},
   date={1995},
   pages={xvi+394},
}

\bib{CS35}{article}{
   author={Van der Corput, J. G.},
   author={Schaake, G.},
   title={Ungleichungen f\"ur Polynome und trigonometrische Polynome},
   language={German},
   journal={Compositio Math.},
   volume={2},
   date={1935},
   pages={321--361},
}

  \bib{Di38}{article}{
    author = {J. {Dieudonn\'e}},
    title = {{La th\'eorie analytique des polyn\^omes d'une variable (a coefficients quelconques).}},
    date = {1938},
    journal = {{Mem. Sci. Math. Fasc. 93, Paris: Gauthier-Villars.}},
    pages={1--71},
}

\bib{Fi17}{article}{
    author = {E. {Fischer}},
    title = {{\"Uber die Differentiationsprozesse der Algebra.}},
    journal = {{J. Reine Angew. Math.}},
    volume = {148},
    pages = {1--78},
    date = {1917},
    publisher = {Walter de Gruyter, Berlin},
}

\bib{GPP14}{article}{
   author={Garcia, S. R.},
   author={Prodan, E.},
   author={Putinar, M.},
   title ={Mathematical and physical aspects of complex symmetric
     operators},
   date = {2014},
   journal={J. Phys. A: Math. Gen.},
   volume={47}
}

\bib{GY03}{book}{
   author={Grace, J. H.},
   author={Young, A.},
   title={The algebra of invariants},
   publisher={Cambridge University Press, Cambridge},
   date={1903},
   pages={ii+viii+384},
}

\bib{Ho54}{article}{
   author={H{\"o}rmander, L.},
   title={On a theorem of Grace},
   journal={Math. Scand.},
   volume={2},
   date={1954},
   pages={55--64},
}

   \bib{KN81}{article}{
   author={Kre{\u\i}n, M. G.},
   author={Na{\u\i}mark, M. A.},
   title={The method of symmetric and Hermitian forms in the theory of the
   separation of the roots of algebraic equations},
   note={Translated from the Russian by O. Boshko and J. L. Howland},
   journal={Linear and Multilinear Algebra},
   volume={10},
   date={1981},
   number={4},
   pages={265--308},
}

\bib{Ma66}{book}{
   author={Marden, M.},
   title={Geometry of polynomials},
   series={Second edition. Mathematical Surveys, No. 3},
   publisher={American Mathematical Society, Providence, R.I.},
   date={1966},
   pages={xiii+243},
}

\bib{RS02}{book}{
   author={Rahman, Q. I.},
   author={Schmeisser, G.},
   title={Analytic theory of polynomials},
   series={London Mathematical Society Monographs. New Series},
   volume={26},
   publisher={The Clarendon Press, Oxford University Press, Oxford},
   date={2002},
   pages={xiv+742},
}

\bib{Se14}{article}{
   author={Sendov, B.},
   author={Sendov, H.},
   title={Loci of complex polynomials, part I},
   journal={Trans. Amer. Math. Soc.},
   volume={366},
   date={2014},
   number={10},
   pages={5155--5184},
}
\bib{Sz22}{article}{
   author={Szeg{\"o}, G.},
   title={Bemerkungen zu einem Satz von J. H. Grace \"uber die Wurzeln
   algebraischer Gleichungen},
   language={German},
   journal={Math. Z.},
   volume={13},
   date={1922},
   number={1},
   pages={28--55},
}
	
  \end{biblist}
\end{bibdiv}
\end{document}